\newtheorem{theorem}{Theorem}[section]
\newtheorem{definition}[theorem]{Definition}
\newtheorem{proposition}[theorem]{Proposition}
\newtheorem{corollary}[theorem]{Corollary}
\newtheorem{remark}[theorem]{Remark}
\newcommand{\Ba}[1]{\begin{array}{#1}}
\newcommand{\Ea}{\end{array}}
\newcommand{\Be}{\begin{equation}}
\newcommand{\Ee}{\end{equation}}
\newcommand{\Bea}{\begin{eqnarray}}
\newcommand{\Eea}{\end{eqnarray}}
\newcommand{\Beas}{\begin{eqnarray*}}
\newcommand{\Eeas}{\end{eqnarray*}}
\newcommand{\Benu}{\begin{enumerate}}
\newcommand{\Eenu}{\end{enumerate}}
\newcommand{\Bi}{\begin{itemize}}
\newcommand{\Ei}{\end{itemize}}
\newcommand{\BR}{\begin{Remark} \em}
\newcommand{\ER}{\end{Remark}}
\newcommand{\BE}{\begin{example} \em}
\newcommand{\EE}{\end{example}}
\newcounter{remark}
\begin{document}
\baselineskip=17pt

\title[Carleson measures for the Hilbert-Hardy space]{Some Carleson measures for the Hilbert-Hardy space of tube domains over symmetric cones}
\author{DAVID B\'EKOLL\'E, Beno\^it F. Sehba}
\address{University of Ngaound\'er\'e, Faculty of Science, Department of Mathematics, P. O. Box 454, Ngaound\'er\'e, Cameroon}
\email{dbekolle@univ-ndere.cm}
\address{Department of Mathematics, University of Ghana, Legon LG 62, Accra Ghana}
\email{{bfsehba@ug.edu.gh}}
\subjclass[2010]{Primary  32A35; 32A36; 42B25; Secondary 32A50, 32M15}

\keywords{Symmetric cones, Hardy spaces, Bergman
spaces, Carleson measures, Maximal function}

\maketitle

\begin{abstract}
In this note,  we obtain a full characterization of radial Carleson measures for the Hilbert-Hardy space on tube domains over symmetric cones.  For large derivatives, we also obtain a full characterization of the measures for which  the corresponding embedding operator is continuous. Restricting to the case of light cones of dimension three, we prove that by freezing one or two variables, the problem of embedding derivatives of the Hilbert-Hardy space into Lebesgue spaces reduces to the characterization of Carleson measures for Hilbert-Bergman spaces of the upper-half plane or the product of two upper-half planes.
\end{abstract}



\section{Introduction}
All over the text, $T_\Omega = V+i\Omega$ is the tube domain over the irreducible symmetric cone $\Omega$ (a symmetric Siegel domain of type I). We put $n=dim\hskip 1truemm V$ and we denote by $r$ the rank of the cone $\Omega.$ For more on symmetric cones and on tube domains over these cones, we refer to \cite{FK}. We shall adopt the notations of \cite{FK} and call $\Delta$ the determinant function of the symmetric cone $\Omega.$

A typical example of an irreducible symmetric cone is
the Lorentz cone
$\Lambda_n,\,\,n\geq 3,$ of $\mathbb{R}^n,$ i.e. the set defined by
$$\Lambda_n=\{(y_1,\cdots,y_n)\in \mathbb R^n: y_1+y_2>0\,\,{\rm and}\,\,y_1^2-\cdots-y_n^2>0\},$$
which is a symmetric cone of rank $r=2$ and its determinant function is given by the
Lorentz form
$$\Delta(y)=y_1^2-\cdots-y_n^2.$$
The Hardy space $H^p (T_\Omega)$ is the space consisting of holomorphic functions $F$ on $T_\Omega$ which satisfy the estimate
$$||F||_{H^p (T_\Omega)}:= \left (\sup \limits_{y\in \Omega} \int_V |F(x+iy)|^p dx\right )^{\frac 1p}< \infty.$$
For $\nu$ a real number, and $1\le p<\infty$, we recall that the Bergman space $A_\nu^p(T_\Omega)$ is the subspace of the Lebesgue space $L^p(T_\Omega, dV_\nu)$ consisting of holomorphic functions; here $dV_\nu(x+iy)=\Delta^{\nu-\frac nr}(y)dxdy$. We observe that  $A_\nu^p(T_\Omega)$ is nontrivial only if $\nu>\frac nr-1$ (see \cite{DD} and \cite{BBGNPR}).
\vskip .2cm
We call $\langle \cdot,\cdot\rangle$ the scalar product in $V$ with respect to which $\Omega$ is self-dual. Recall that the Box operator $\Box=\Delta(\frac{1}{i}
\frac{\partial}{\partial x})$ is the differential operator of
degree $r$ in $\mathbb R^n$ defined by the equality: \Be
\Box\,[e^{i\langle x|\xi \rangle}]=\Delta(\xi)e^{i\langle x|\xi \rangle}, \quad
x\in \mathbb R^n,\,\xi\in\Omega. \label{bbox} \Ee
\begin{definition}
Let $k$ be a positive integer. A positive Borel measure $\mu$ on $T_\Omega$ is called a $k$-box Carleson measure for $H^p (T_\Omega), \hskip 1truemm 0<p<\infty$ if there exists a positive constant $C= C(p, k, \mu)$ such that
\begin{equation} \label{carlhardy}
\int_{T_\Omega} |\Box^k F(z)|^p d\mu (z) \leq C||F||_{H^p (T_\Omega)}
\end{equation}
for every $F\in H^p (T_\Omega).$ When $k=0$, $\mu$ is just called a Carleson measure for $H^p (T_\Omega)$.
\end{definition}

{\bf Problem :} Characterize the $k$-box Carleson measures for the Hardy space $H^p (T_\Omega), \hskip 1truemm 0<p<\infty.$
\vskip 2truemm
In the one-dimensional case ($n=1, \hskip 2truemm \Omega = (0, \infty)),$ the solution to this problem was provided by L. Carleson (cf. \cite{G}) for $k=0,$ and for $k\neq 0$, the result is due to D. Luecking (cf. \cite{L}). For tube domains over symmetric cones, the problem is still essentially open. Pretty recently, for values of $p, q$ such that $p< q,$ a characterization of $q$-Carleson measures for $H^p (T_\Omega)$, that is the positive measures $\mu$ such that $H^p (T_\Omega)$ embeds continuously into $L^q(T_\Omega, d\mu),$ was obtained in \cite{BST} in terms of boundedness of a kind of balayage of the measure $\mu$. In this note, we are interested in the above question in the case $p=2$. We do not provide a general characterization but restricting to the two following classes of  measures:\\
- radial measures;\\
- products of a Dirac measure and a measure in the lower dimension,\\
we provide a full characterization. In particular, we prove that when our measure is the product of the Dirac measure and a measure in dimension two, then the problem reduces to a characterization of Carleson measures for Hilbert-Bergman spaces of the product of two upper-half planes. For completeness of our paper, we characterize at the end of our work, Carleson embeddings for (vector) weighted Bergman spaces of the product of upper-half planes. 

\section{Preliminary results}
We refer to \cite{BBGNPR}. The determinant function $\Delta$ of the symmetric cone $\Omega$ has a natural holomorphic extension $\Delta (\frac zi)$ to the tube domain $T_\Omega$ which does not vanish on $T_\Omega.$ For $\alpha\in \mathbb R,$ we shall denote $\Delta^\alpha (\frac zi)$ the holomorphic determination of the power which coincides with $\Delta^\alpha (y)$ when {\bf {$z=iy\in i\Omega.$}}

\begin{proposition}\label{estimate}
The  integral
$$I(y, w):= \int_V \left \vert \Delta^{-\alpha} \left (\frac {x+iy -\bar w}i\right )\right \vert^2 dx \quad (y\in \Omega, \hskip 2truemm w\in T_\Omega)$$
converges if and only if $\alpha > \frac nr - \frac 12.$ In this  case, there exists a positive constant $C_\alpha$ such that
$$I(y, w)=C_\alpha \Delta^{-2\alpha +\frac nr} (y+\Im m\hskip 1truemm w).$$
Furthermore, the weighted Bergman kernel functions $$F(z)=\Delta^{-\alpha} (z-\bar w) \quad (w\in T_\Omega)$$ belong to $H^2 (T_\Omega)$ if and only if $\alpha > \frac nr - \frac 12.$ In this case, $$||F||_{H^2 (T_\Omega)}^2 = C_\alpha \Delta^{-2\alpha + \frac nr} (\Im m \hskip 1truemm w).$$
\end{proposition}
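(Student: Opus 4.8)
The plan is to reduce the inner integral to a Laplace transform on $\Omega$ and then invoke Plancherel's theorem. Writing $w=u+iv$ and using the translation invariance of Lebesgue measure on $V$ (substitute $x\mapsto x+u$), one sees that $I(y,w)$ depends only on $Y:=y+v\in\Omega$ and equals $J(Y):=\int_V\bigl|\Delta^{-\alpha}\bigl(\tfrac{t+iY}{i}\bigr)\bigr|^{2}\,dt$; so it suffices to analyse $J$.

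For $\alpha>\tfrac nr-1$ one has the absolutely convergent Fourier--Laplace representation $\Delta^{-\alpha}\bigl(\tfrac zi\bigr)=\tfrac1{\Gamma_\Omega(\alpha)}\int_\Omega e^{i\langle\xi,z\rangle}\Delta^{\alpha-\frac nr}(\xi)\,d\xi$, $z\in T_\Omega$, the analytic continuation to the tube of $\int_\Omega e^{-\langle\xi,y\rangle}\Delta^{s-\frac nr}(\xi)\,d\xi=\Gamma_\Omega(s)\Delta^{-s}(y)$ (valid for $y\in\Omega$, $\Re s>\tfrac nr-1$). With $z=t+iY$ this displays $t\mapsto\Delta^{-\alpha}\bigl(\tfrac{t+iY}{i}\bigr)$ as the Fourier transform of $g_{\alpha,Y}(\xi):=\tfrac1{\Gamma_\Omega(\alpha)}\mathbf 1_\Omega(\xi)e^{-\langle\xi,Y\rangle}\Delta^{\alpha-\frac nr}(\xi)$. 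By Plancherel, $J(Y)$ is finite if and only if $g_{\alpha,Y}\in L^{2}(V)$, and then it equals a dimensional (Plancherel) constant times $\|g_{\alpha,Y}\|_{L^2}^{2}=\Gamma_\Omega(\alpha)^{-2}\int_\Omega e^{-2\langle\xi,Y\rangle}\Delta^{2\alpha-\frac{2n}{r}}(\xi)\,d\xi$. Applying the Laplace formula once more, now with parameter $s=2\alpha-\tfrac nr$ (which exceeds $\tfrac nr-1$ exactly when $\alpha>\tfrac nr-\tfrac12$), this last integral converges precisely for $\alpha>\tfrac nr-\tfrac12$ and, after rescaling $\xi\mapsto\xi/2$ and using the homogeneity of degree $r$ of $\Delta$, equals $2^{-r(2\alpha-\frac nr)}\Gamma_\Omega\bigl(2\alpha-\tfrac nr\bigr)\Delta^{-2\alpha+\frac nr}(Y)$. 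Collecting the constants gives $I(y,w)=C_\alpha\,\Delta^{-2\alpha+\frac nr}(y+\Im w)$ with an explicit $C_\alpha>0$, for $\alpha>\tfrac nr-\tfrac12$.

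The step I expect to be the main obstacle is the converse, that is, showing divergence for \emph{every} $\alpha\le\tfrac nr-\tfrac12$, since Plancherel only settles the range $\alpha>\tfrac nr-1$ cleanly. At the endpoint $\alpha=\tfrac nr-\tfrac12$ one still has $g_{\alpha,Y}\in L^{1}(V)$ but $g_{\alpha,Y}\notin L^{2}(V)$ -- the exponent $2\alpha-\tfrac{2n}{r}=-1$ being exactly at the divergence threshold for $\int_\Omega e^{-2\langle\xi,Y\rangle}\Delta^{2\alpha-\frac{2n}{r}}(\xi)\,d\xi$ -- and since the Fourier transform is injective on tempered distributions, $\Delta^{-\alpha}\bigl(\tfrac{\cdot+iY}{i}\bigr)=\widehat{g_{\alpha,Y}}$ cannot lie in $L^{2}(V)$; hence $J(Y)=\infty$. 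For $\alpha<\tfrac nr-\tfrac12$ one propagates this downward from the endpoint using the classical lower bound $\bigl|\Delta\bigl(\tfrac zi\bigr)\bigr|\ge\Delta(\Im z)$ for $z\in T_\Omega$ (via the quadratic representation $\Delta\bigl(\tfrac{t+iY}{i}\bigr)=\Delta(Y)\,\Delta(e-it')$ with $t'\in V$, so the second factor has modulus $\ge1$): this yields $\bigl|\Delta^{-\alpha'}\bigl(\tfrac{t+iY}{i}\bigr)\bigr|^{2}\ge\Delta(Y)^{2(\alpha-\alpha')}\bigl|\Delta^{-\alpha}\bigl(\tfrac{t+iY}{i}\bigr)\bigr|^{2}$ for $\alpha'<\alpha$, so divergence at $\alpha=\tfrac nr-\tfrac12$ forces divergence for all smaller $\alpha$.

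Finally, the Hardy-space statement follows by taking the supremum over $y\in\Omega$: since $\|F\|_{H^{2}(T_\Omega)}^{2}=\sup_{y\in\Omega}I(y,w)$ for $F(z)=\Delta^{-\alpha}\bigl(\tfrac{z-\bar w}{i}\bigr)$, the case $\alpha\le\tfrac nr-\tfrac12$ gives $+\infty$, while for $\alpha>\tfrac nr-\tfrac12$ one has $2\alpha-\tfrac nr>0$ (here $n\ge r$ is used), so $\Delta^{-2\alpha+\frac nr}$ is nonincreasing for the cone order and $\sup_{y\in\Omega}\Delta^{-2\alpha+\frac nr}(y+\Im w)=\Delta^{-2\alpha+\frac nr}(\Im w)$ (approached as $y\to0$), whence $\|F\|_{H^{2}(T_\Omega)}^{2}=C_\alpha\,\Delta^{-2\alpha+\frac nr}(\Im w)$.
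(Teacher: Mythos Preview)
The paper does not prove this proposition; it is stated in Section~2 as a preliminary result, with the blanket attribution ``We refer to \cite{BBGNPR}'' at the head of the section. So there is no in-paper argument to compare against.

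Your proof is correct and is essentially the standard one. The reduction to $Y=y+\Im w$ by translation, the Fourier--Laplace representation $\Delta^{-\alpha}(z/i)=\Gamma_\Omega(\alpha)^{-1}\int_\Omega e^{i\langle\xi,z\rangle}\Delta^{\alpha-n/r}(\xi)\,d\xi$ for $\alpha>\tfrac nr-1$, Plancherel, and the evaluation of $\int_\Omega e^{-2\langle\xi,Y\rangle}\Delta^{2\alpha-2n/r}(\xi)\,d\xi$ via the gamma integral of the cone (convergent exactly when $2\alpha-\tfrac nr>\tfrac nr-1$) are all sound. Your handling of the divergence side is also fine: at the endpoint $\alpha=\tfrac nr-\tfrac12$ one still has $g_{\alpha,Y}\in L^1\setminus L^2$, and injectivity of the Fourier transform on $\mathcal S'$ forces $\widehat{g_{\alpha,Y}}\notin L^2$; the comparison $|\Delta(z/i)|\ge\Delta(\Im z)$ via the quadratic representation and the spectral factorisation $|\Delta(e-it')|=\prod_j(1+\lambda_j^2)^{1/2}\ge1$ then pushes divergence down to all smaller $\alpha$. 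The Hardy-space computation is correct as well: since $n\ge r$ in any irreducible symmetric cone, $\alpha>\tfrac nr-\tfrac12$ forces $2\alpha-\tfrac nr>0$, so $\Delta^{-2\alpha+n/r}$ is decreasing for the cone order and the supremum over $y\in\Omega$ is $\Delta^{-2\alpha+n/r}(\Im w)$, attained in the limit $y\to0$.

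One cosmetic point: in your last paragraph you write $F(z)=\Delta^{-\alpha}\bigl((z-\bar w)/i\bigr)$, whereas the paper's statement has $F(z)=\Delta^{-\alpha}(z-\bar w)$. Since $\Delta$ is homogeneous of degree $r$ and $|i|=1$, these differ by a unimodular factor and have the same $H^2$ norm, so nothing is lost.
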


Let us recall the following characterization of $H^2 (T_\Omega)$ {\textbf {(cf. e.g. \cite{FK})}}.

\begin{proposition} \label{Paley-Wiener} (Paley-Wiener)
A holomorphic function $F$ on $T_\Omega$ belongs to the Hardy space $H^2 (T_\Omega)$ if and only if there exists a function $f\in L^2 (\Omega)$ such that
\begin{equation}\label{laplace}
F(z) = \int_\Omega f(t)e^{i\langle t, z\rangle}dt \quad \quad \quad (z\in T_\Omega).
\end{equation}
In this case, $||F||_{H^2 (T_\Omega)}^2 = (2\pi)^n\int_\Omega |f(t)|^2 dt.$
\end{proposition}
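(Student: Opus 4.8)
The plan is to realize membership in $H^2(T_\Omega)$ as a statement about the Fourier transform, in the real variable $x$, of the slice functions $g_y(x) := F(x+iy)$, and to use the self-duality of $\Omega$ to pin down the support of that transform. Throughout I use the convention $\widehat{g}(\xi) = \int_V g(x)e^{-i\langle\xi,x\rangle}\,dx$, for which Plancherel reads $\int_V |g|^2\,dx = (2\pi)^{-n}\int_V |\widehat g|^2\,d\xi$, and I write $\langle t, z\rangle = \langle t,x\rangle + i\langle t,y\rangle$, so that $e^{i\langle t,z\rangle} = e^{i\langle t,x\rangle}e^{-\langle t,y\rangle}$ for $z = x+iy$.

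For sufficiency, suppose $F(z) = \int_\Omega f(t)e^{i\langle t,z\rangle}\,dt$ with $f\in L^2(\Omega)$. Since $\langle t, y\rangle > 0$ for $t,y\in\Omega$ by self-duality, the factor $e^{-\langle t,y\rangle}$ makes the integral and its $x$-derivatives absolutely convergent on every tube $V + i(\Omega\cap\{\,\langle\cdot,e\rangle>\varepsilon\})$, so $F$ is holomorphic by differentiation under the integral sign. For fixed $y$, $g_y$ is the inverse Fourier transform of $t\mapsto f(t)e^{-\langle t,y\rangle}\mathbf{1}_\Omega(t)$, whence Plancherel yields $\int_V |F(x+iy)|^2\,dx = (2\pi)^n\int_\Omega |f(t)|^2 e^{-2\langle t,y\rangle}\,dt$. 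As $y$ shrinks to $0$ inside $\Omega$ the integrand increases monotonically to $|f|^2$, so the supremum over $y\in\Omega$ equals $(2\pi)^n\int_\Omega |f|^2\,dt$; this simultaneously shows $F\in H^2(T_\Omega)$ and gives the asserted norm identity.

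For necessity, let $F\in H^2(T_\Omega)$. Each $g_y$ lies in $L^2(V)$ with $\|g_y\|_2\le\|F\|_{H^2}$, so $\widehat{g_y}$ is defined and $L^2$-bounded uniformly in $y$. Holomorphy of $F$ gives the Cauchy--Riemann relations $\partial_{y_j}F = i\,\partial_{x_j}F$; transferring them to the Fourier side, where $\partial_{x_j}$ becomes multiplication by $i\xi_j$, yields $\partial_{y_j}\widehat{g_y}(\xi) = -\xi_j\,\widehat{g_y}(\xi)$, which integrates to $\widehat{g_y}(\xi) = h(\xi)e^{-\langle\xi,y\rangle}$ for a single function $h$ independent of $y$. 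The uniform bound then reads $\int_V |h(\xi)|^2 e^{-2\langle\xi,y\rangle}\,d\xi \le (2\pi)^n\|F\|_{H^2}^2$ for every $y\in\Omega$. If $h$ were nonzero on a positive-measure set outside $\bar\Omega$, the self-duality characterization $\bar\Omega = \{\xi : \langle\xi,y\rangle\ge 0 \ \forall\, y\in\Omega\}$ would supply, for a.e.\ such $\xi$, a $y\in\Omega$ with $\langle\xi,y\rangle<0$; replacing $y$ by $\lambda y$ and letting $\lambda\to\infty$ would force the integral to blow up, contradicting the bound. Hence $h$ is supported in $\bar\Omega$, and setting $f := (2\pi)^{-n}h$ and inverting the Fourier transform recovers $F(x+iy)=\int_\Omega f(t)e^{i\langle t,z\rangle}\,dt$; letting $y\to 0$ and using monotone convergence shows $f\in L^2(\Omega)$ with $(2\pi)^n\int_\Omega|f|^2 = \|F\|_{H^2}^2$.

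The main obstacle is the support step: rigorously transferring the Cauchy--Riemann equations to the Fourier transform and justifying the $y$-differentiation of $\widehat{g_y}$. I would handle this by testing against Schwartz functions, that is, by showing $y\mapsto\int_V\widehat{g_y}(\xi)\overline{\varphi(\xi)}\,d\xi$ is $C^1$ with the claimed derivative for each $\varphi$, rather than differentiating $\widehat{g_y}$ pointwise; and I would couple this with the fact that the net $\{g_y\}_{y\in\Omega}$ is Cauchy in $L^2(V)$ as $y\to 0$, a standard consequence of the uniform Hardy bound, which legitimizes the limit $y\to 0$ and the identification of $f$. The self-duality characterization of $\bar\Omega$ is precisely what converts ``holomorphic and $L^2$-bounded on slices'' into ``Fourier support in the cone.''
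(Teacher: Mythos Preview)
The paper does not supply its own proof of this proposition: it is stated as a classical Paley--Wiener theorem and attributed to the reference \cite{FK} (Faraut--Kor\'anyi, \emph{Analysis on symmetric cones}). There is therefore no in-paper argument to compare against.

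Your outline is the standard route and is essentially correct. The sufficiency direction and the norm identity via Plancherel and monotone convergence are clean. For necessity, the key idea---that the Cauchy--Riemann relations force $\widehat{g_y}(\xi)=h(\xi)e^{-\langle \xi,y\rangle}$ and that the uniform $L^2$ bound then confines the support of $h$ to $\overline{\Omega}$---is exactly the classical one. Two points deserve a little more care than you give them. First, the support argument as written quantifies the direction $y$ pointwise in $\xi$; to turn this into a contradiction you should pass to a subset of positive measure on which a \emph{single} $y_0\in\Omega$ satisfies $\langle\xi,y_0\rangle\le -\delta<0$ (this follows from separating a point $\xi_0\notin\overline\Omega$ from the closed convex cone and using continuity), and then observe that the integral over that subset alone already diverges as $y_0$ is rescaled. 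Second, the distributional transfer of the Cauchy--Riemann equations and the existence of a boundary limit $g_y\to g_0$ in $L^2(V)$ are indeed the technical crux; your plan to test against Schwartz functions is the right one, and the full details are precisely what one finds in Stein--Weiss or Faraut--Kor\'anyi.
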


\begin{definition}
When  a holomorphic function $F$ on $T_\Omega$ can be expressed as in {\rm (\ref{laplace})}, we say that $F$  is the Laplace transform of $f.$
\end{definition}

Let us recall the following Paley-Wiener result for the Bergman spaces $A_\nu^2(T_\Omega)$ (see \cite{BBGNPR} and \cite{FK}).
\begin{proposition} \label{PWBerg} (Paley-Wiener)
Let $\nu>\frac nr-1$. A holomorphic function $F$ on $T_\Omega$ belongs to the Bergman space $A_\nu^2 (T_\Omega)$ if and only if there exists a function $f\in L^2 (\Omega,\frac{dy}{\Delta^{\nu}(y)})$ such that
\begin{equation}\label{laplaceberg}
F(z) = \int_{\Omega} f(t)e^{i\langle t, z\rangle}dt \quad \quad \quad (z\in T_\Omega).
\end{equation}
In this case, $||F||_{A_\nu^2}^2 = (2\pi)^n \Gamma_\Omega (\nu)\int_\Omega |f(t)|^2 \frac{dt}{\Delta^{\nu}(t)},$
where $\Gamma_\Omega$ denotes the gamma function of the cone $\Omega$.
\end{proposition}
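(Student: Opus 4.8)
The proof splits into the two implications. For sufficiency, given $f\in L^2\bigl(\Omega,\Delta^{-\nu}(t)\,dt\bigr)$, define $F$ by $(\ref{laplaceberg})$; for $z=x+iy$ with $y\in\Omega$, Cauchy--Schwarz gives
\[
|F(x+iy)|\le\Bigl(\int_\Omega|f(t)|^2\Delta^{-\nu}(t)\,dt\Bigr)^{1/2}\Bigl(\int_\Omega e^{-2\langle t,y\rangle}\Delta^{\nu}(t)\,dt\Bigr)^{1/2},
\]
and the last integral is finite since $\nu>-1$; applying the same bound to difference quotients shows $F$ is holomorphic on $T_\Omega$. For fixed $y\in\Omega$, Plancherel in the $x$-variable yields $\int_V|F(x+iy)|^2\,dx=(2\pi)^n\int_\Omega|f(t)|^2e^{-2\langle t,y\rangle}\,dt$. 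Integrating this over $\Omega$ against $\Delta^{\nu-\frac nr}(y)\,dy$, applying Tonelli and then the change of variable $y\mapsto y/2$ together with the Gindikin--Koecher formula $\int_\Omega e^{-\langle t,y\rangle}\Delta^{\nu-\frac nr}(y)\,dy=\Gamma_\Omega(\nu)\Delta^{-\nu}(t)$ (which converges precisely when $\nu>\frac nr-1$), one obtains $F\in A_\nu^2(T_\Omega)$ together with the asserted value of $\|F\|_{A_\nu^2}^2$.

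For the converse, fix $F\in A_\nu^2(T_\Omega)$; the crucial point is a key lemma: \emph{for every $y_0\in\Omega$ the function $z\mapsto F(z+iy_0)$ lies in $H^2(T_\Omega)$, with $\|F(\cdot+iy_0)\|_{H^2}\le C(y_0)\|F\|_{A_\nu^2}$.} I would prove this first for $F$ in the linear span of the weighted Bergman kernels $z\mapsto\Delta^{-(\nu+\frac nr)}(z-\bar w)$, $w\in T_\Omega$, which spans a dense subspace of $A_\nu^2(T_\Omega)$. Each such kernel belongs to $H^2(T_\Omega)$ by Proposition~\ref{estimate} (the exponent $\nu+\frac nr$ satisfies $\nu+\frac nr>\frac nr-\frac12$ since $\nu>\frac nr-1\ge0$), hence so does every $F$ in their span, and by Proposition~\ref{Paley-Wiener} such an $F$ is the Laplace transform of some $h\in L^2(\Omega)$. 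Then $v\mapsto\int_V|F(x+iv)|^2\,dx=(2\pi)^n\int_\Omega|h(t)|^2e^{-2\langle t,v\rangle}\,dt$ is non-increasing along the order of the cone, because $\langle t,v'-v\rangle\ge0$ whenever $v'-v\in\Omega$ (self-duality $\Omega=\Omega^*$); hence $\|F(\cdot+iy_0)\|_{H^2}^2=\sup_{v\in\Omega}\int_V|F(x+iv+iy_0)|^2\,dx=\int_V|F(x+iy_0)|^2\,dx$. Since $\int_V|F(x+iy_0)|^2\,dx\le\int_V|F(x+iv)|^2\,dx$ for every $v$ in the bounded order-interval $\Omega\cap(y_0-\Omega)$, integrating this inequality over a fixed relatively compact open subset $U$ of that interval against $\Delta^{\nu-\frac nr}(v)\,dv$ (on $U$ the weight is bounded above and below by positive constants) gives $\int_V|F(x+iy_0)|^2\,dx\le C(y_0)\|F\|_{A_\nu^2}^2$; density then extends the lemma to all of $A_\nu^2(T_\Omega)$.

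Granting the lemma, Proposition~\ref{Paley-Wiener} produces, for each $y_0\in\Omega$, a function $f_{y_0}\in L^2(\Omega)$ with $F(z+iy_0)=\int_\Omega f_{y_0}(t)e^{i\langle t,z\rangle}\,dt$. Comparing the two representations of $F\bigl(z+i(y_0+y_1)\bigr)$ — one from $f_{y_0}$ with $z$ replaced by $z+iy_1$, the other from $f_{y_0+y_1}$ — and using injectivity of the Fourier transform, one finds $f_{y_0+y_1}(t)=e^{-\langle t,y_1\rangle}f_{y_0}(t)$ a.e.; hence $f(t):=e^{\langle t,y_0\rangle}f_{y_0}(t)$ is independent of $y_0\in\Omega$, measurable on $\Omega$, and, writing an arbitrary $w=x+iy\in T_\Omega$ as $w=\bigl(x+i\tfrac y2\bigr)+i\tfrac y2$, satisfies $F(w)=\int_\Omega f(t)e^{i\langle t,w\rangle}\,dt$. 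Finally, running the sufficiency computation in reverse (Plancherel in $x$, Tonelli, the Gindikin--Koecher formula) shows that $F\in A_\nu^2$ forces $f\in L^2\bigl(\Omega,\Delta^{-\nu}(t)\,dt\bigr)$ and yields the norm identity.

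The only step that is not routine Fourier analysis is the key lemma: one must upgrade the merely averaged control $\int_\Omega\bigl(\int_V|F(x+iy)|^2\,dx\bigr)\Delta^{\nu-\frac nr}(y)\,dy<\infty$ to the uniform control $\sup_y\int_V|F(x+iy+iy_0)|^2\,dx<\infty$ needed to apply Proposition~\ref{Paley-Wiener} on the shifted tube. The two facts that make this possible — monotonicity of the slice $L^2$-norms along the cone order (a direct consequence of self-duality $\Omega=\Omega^*$) and boundedness of the order-interval $\Omega\cap(y_0-\Omega)$ — are exactly where the symmetric-cone geometry and the hypothesis $\nu>\frac nr-1$ enter in an essential way.
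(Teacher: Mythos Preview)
The paper does not prove Proposition~\ref{PWBerg}; it is quoted as a known result from \cite{BBGNPR} and \cite{FK}, so there is no in-paper proof to compare against. Your argument is correct and follows the standard line found in those references: Plancherel on horizontal slices together with the gamma integral $\int_\Omega e^{-\langle t,y\rangle}\Delta^{\nu-n/r}(y)\,dy=\Gamma_\Omega(\nu)\Delta^{-\nu}(t)$ handles both directions, and the only nontrivial step in the converse is showing that every $F\in A_\nu^2(T_\Omega)$, once translated by $iy_0$, lies in $H^2(T_\Omega)$.

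Your route to that key lemma---establishing the bound first on the span of the Bergman kernels (where the Laplace representation is already available, so monotonicity of the slice norms along the cone order is immediate) and then extending by density---is valid: density follows from the reproducing property, and the passage to the limit is justified because point evaluations are continuous on both $A_\nu^2$ and $H^2$. The more common textbook approach bounds $\int_V|F(x+iy_0)|^2\,dx$ directly for arbitrary $F\in A_\nu^2$ via the sub-mean-value inequality for $|F|^2$ over small balls, then feeds this into the Stein--Weiss machinery for holomorphic functions on tubes to recover the Laplace representation; your density argument trades that pointwise analytic estimate for a soft functional-analytic one, which is a perfectly acceptable exchange. One cosmetic remark: the phrase ``the last integral is finite since $\nu>-1$'' is correct but terse---it is the convergence criterion for the cone gamma integral with exponent $\nu+\tfrac nr$, which indeed reduces to $\nu>-1$.
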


We next focus on the upper half-plane $\Pi^+$ of the complex plane $\mathbb C.$

\begin{definition}
Let $\alpha>-1$. We denote by $A^2_\alpha (\Pi^+)$ the weighted Bergman space consisting of holomorphic functions $G$ on $\Pi^+$ satisfying the estimate
$$||G||_{A^2_\alpha (\Pi^+)}:=\left (\int_{\Pi^+} |G(x+iy)|^2 \hskip 1truemm y^{\alpha}dx dy\right )^{\frac 12} <\infty.$$
\end{definition}

We next recall the analogue of the Paley-Wiener theorem for the Bergman space $A^2_\alpha (\Pi^+)$ (cf. \cite{BBGNPR}).

\begin{proposition}\label{PW} 
Let $G$ be a holomorphic function on $\Pi^+.$ The following assertions are equivalent.
\begin{enumerate}
\item
$G$ belongs to the weighted Bergman space $A^2_\alpha (\Pi^+).$ 
\item 
There exists a function $g:(0, \infty)\rightarrow \mathbb C$ satisfying the estimate $\int_0^\infty |g(t)|^2 \frac {dt}{t^{\alpha+1}} <\infty,$ such that
$$G(z) = \int_0^\infty g(t)e^{itz}dt \quad \quad \quad (z\in \Pi^+).$$
\end{enumerate}
In this case, $||G||_{A^2_\alpha (\Pi^+)}^2 =  2\pi \Gamma (\alpha +1)\int_0^\infty |g(t)|^2 \frac {dt}{t^{\alpha+1}},$ where $\Gamma$ is the usual gamma function.
\end{proposition}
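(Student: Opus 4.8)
The plan is to prove the two implications separately, extracting the norm identity along the way.

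For the implication (2) $\Rightarrow$ (1), given $g$ with $\int_0^\infty |g(t)|^2\,t^{-\alpha-1}\,dt<\infty$ I would first note that $G(z)=\int_0^\infty g(t)e^{itz}\,dt$ is holomorphic on $\Pi^+$: for $\Im z\ge\delta>0$, Cauchy--Schwarz gives
\[
\int_0^\infty|g(t)|e^{-t\delta}\,dt\le\Bigl(\int_0^\infty|g(t)|^2 t^{-\alpha-1}\,dt\Bigr)^{1/2}\Bigl(\int_0^\infty t^{\alpha+1}e^{-2t\delta}\,dt\Bigr)^{1/2}<\infty,
\]
so the integral converges locally uniformly and may be differentiated under the integral sign. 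Next, for each fixed $y>0$ the slice $x\mapsto G(x+iy)$ is the Fourier transform of $t\mapsto g(t)e^{-ty}\mathbf 1_{(0,\infty)}(t)$, which lies in $L^1\cap L^2(0,\infty)$, so Plancherel's theorem yields $\int_{\mathbb R}|G(x+iy)|^2\,dx=2\pi\int_0^\infty|g(t)|^2e^{-2ty}\,dt$. Multiplying by $y^\alpha$, integrating in $y$, applying Tonelli's theorem, and evaluating the inner integral with the Euler formula $\int_0^\infty e^{-2ty}y^\alpha\,dy=\Gamma(\alpha+1)(2t)^{-\alpha-1}$ then produces the stated norm identity; in particular $G\in A^2_\alpha(\Pi^+)$.

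For the converse (1) $\Rightarrow$ (2), the crucial preliminary is an $L^2$-slice estimate: for every $G\in A^2_\alpha(\Pi^+)$ and every $y>0$,
\[
\int_{\mathbb R}|G(x+iy)|^2\,dx\le C_\alpha\,y^{-\alpha-1}\,\|G\|^2_{A^2_\alpha(\Pi^+)}.
\]
I would derive this from the sub-mean-value inequality for the subharmonic function $|G|^2$ over the disc of radius $y/2$ centred at $x+iy$: on that disc the imaginary part is comparable to $y$, so the Bergman weight $(\Im w)^\alpha$ can be inserted at the cost of a factor $C_\alpha y^{-\alpha}$, and integrating in $x$ and using Fubini gives the bound. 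Granting this, fix $\varepsilon>0$ and set $G_\varepsilon(z):=G(z+i\varepsilon)$; the slice estimate shows $\sup_{y>0}\int_{\mathbb R}|G_\varepsilon(x+iy)|^2\,dx\le C_\alpha\varepsilon^{-\alpha-1}\|G\|^2_{A^2_\alpha(\Pi^+)}<\infty$, so $G_\varepsilon\in H^2(\Pi^+)$. By the Paley--Wiener theorem for $H^2(\Pi^+)$ (the case $n=1$ of Proposition \ref{Paley-Wiener}) there is $g_\varepsilon\in L^2(0,\infty)$ with $G(z+i\varepsilon)=\int_0^\infty g_\varepsilon(t)e^{itz}\,dt$, equivalently $G(z)=\int_0^\infty g_\varepsilon(t)e^{t\varepsilon}e^{itz}\,dt$ for $\Im z>\varepsilon$. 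Uniqueness of the Fourier transform forces $g_{\varepsilon'}(t)e^{t\varepsilon'}=g_\varepsilon(t)e^{t\varepsilon}$ for almost every $t$ when $\varepsilon'<\varepsilon$, so $g(t):=g_\varepsilon(t)e^{t\varepsilon}$ is independent of $\varepsilon$ and $G(z)=\int_0^\infty g(t)e^{itz}\,dt$ for all $z\in\Pi^+$. Finally, re-running the computation of the first part, now justified by Tonelli's theorem since all integrands are nonnegative, gives the same identity, and since its left-hand side is finite $g$ has the required integrability.

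The step I expect to be the main obstacle is the $L^2$-slice estimate together with the ensuing reduction to the Hardy space; once $G(\cdot+i\varepsilon)\in H^2(\Pi^+)$ is established, the rest (patching the $g_\varepsilon$ and one Tonelli computation) is routine. A secondary point needing care is that Plancherel's theorem and Fubini in the first part must be applied with only the a posteriori knowledge that $g$ is weighted square-integrable, which is precisely why the norm identity is first proved in the direction (2) $\Rightarrow$ (1) for every admissible $g$ and then quoted in the converse.
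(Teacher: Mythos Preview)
The paper does not supply a proof of this proposition; it is simply recalled from the reference \cite{BBGNPR}, so there is no in-paper argument to compare against.

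Your proof is correct and follows the standard route. The direction $(2)\Rightarrow(1)$ is handled cleanly: the Cauchy--Schwarz bound gives local uniform convergence (hence holomorphy), Plancherel on horizontal slices is legitimate because $g(t)e^{-ty}\in L^1\cap L^2$, and Tonelli plus the gamma integral yield the norm identity. For $(1)\Rightarrow(2)$ your slice estimate via the sub-mean-value inequality is the right tool, the reduction to $G(\cdot+i\varepsilon)\in H^2(\Pi^+)$ and the invocation of the classical Paley--Wiener theorem are exactly what one needs, and the patching argument for the $g_\varepsilon$ via Fourier uniqueness is sound (the function $(g_{\varepsilon'}(t)e^{t\varepsilon'}-g_\varepsilon(t)e^{t\varepsilon})e^{-t\eta}$ is indeed in $L^2$ for $\eta>\varepsilon$, so uniqueness applies). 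Your final remark about re-running the Tonelli computation once the representation is in hand, and reading off the weighted integrability of $g$ from the finiteness of $\|G\|_{A^2_\alpha}$, closes the loop correctly.

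One small point: carrying the computation through exactly gives $\|G\|_{A^2_\alpha}^2 = 2\pi\,\Gamma(\alpha+1)\,2^{-\alpha-1}\int_0^\infty |g(t)|^2\,t^{-\alpha-1}\,dt$, so the constant displayed in the proposition appears to be missing a factor $2^{-\alpha-1}$; this does not affect your method, only the value of the normalizing constant.
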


\begin{definition}
Let $\vec{\alpha}=(\alpha_1,\alpha_2)$ where $\alpha_1,\alpha_2>-1$. We denote by $A^2_{\vec{\alpha}}  (\Pi^+\times \Pi^+)$ the weighted  Bergman space consisting of holomorphic functions $G$ on $\Pi^+\times \Pi^+,$ which satisfy the following estimate 
$$
\begin{array}{clcr}
||G||_{A^2_{\vec{\alpha}} (\Pi^+\times \Pi^+)}&=\left (\int_{\Pi^+\times \Pi^+} |G(x_1+iy_1, x_2+iy_2)|^2 y_1^{\alpha_1}y_2^{\alpha_2}dx_1 dx_2 dy_1dy_2\right )^{\frac 12}\\
&<\infty.
\end{array}
$$
\end{definition}
When $\alpha_1=\alpha_2=\alpha$, for simplicity, we use the notation $A^2_\alpha  (\Pi^+\times \Pi^+)$ for $A^2_{(\alpha,\alpha)}  (\Pi^+\times \Pi^+)$.
\vskip .2cm
We recall also the following Paley-Wiener result for Bergman spaces of the tube domain $\Pi^+\times \Pi^+$ in $\mathbb C^2$ over the first octant $(0, \infty)\times (0, \infty)$ (cf. \cite{SW}).
\begin{proposition}\label{PWProd} 
Let $\vec{\alpha}=(\alpha_1,\alpha_2)$, with $\alpha_1,\alpha_2>-1$. Let $G$ be a holomorphic function on $\Pi^+\times \Pi^+.$ Then the following assertions are equivalent.
\begin{enumerate}
\item
$G$ belongs to the weighted Bergman space $A^2_{\vec{\alpha}} (\Pi^+\times \Pi^+).$ 
\item 
There exists a function $g:(0, \infty) 	\times (0, \infty)\rightarrow \mathbb C$ satisfying the estimate $\int_{(0, \infty) 	\times (0, \infty)} |g(t_1,t_2)|^2 \frac {dt_1}{t^{\alpha_1+1}} \frac {dt_2}{t^{\alpha_2+1}}<\infty,$ such that
$$G(z) = \int_{(0, \infty)\times (0, \infty)} g(t)e^{i(z_1 t_1 + z_2 t_2)}dt_1 dt_2 \quad \quad \quad (z\in \Pi^+\times \Pi^+).$$
\end{enumerate}
In this case, $||G||_{A^2_\alpha (\Pi^+\times \Pi^+)} =  c_{\vec{\alpha}}\left (\int_{(0, \infty) 	\times (0, \infty)} |g(t_1,t_2)|^2 \frac {dt_1}{t_1^{\alpha_1+1}} \frac {dt_2}{t_2^{\alpha_2+1}}\right )^{\frac 12}$ with $c_{\vec{\alpha}}=2\pi \sqrt {\Gamma (\alpha_1 +1)\Gamma (\alpha_2 +1)}.$
\end{proposition}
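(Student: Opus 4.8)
The plan is to deduce Proposition \ref{PWProd} from the one–variable result Proposition \ref{PW} by running the Paley--Wiener argument separately in each of the two complex variables and then gluing the resulting transforms together with Fubini's and Plancherel's theorems. I would establish the two implications in turn, and the normalization constant will take care of itself once the one–variable constant is known.

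For the direction (2)$\Rightarrow$(1), I would start from a function $g$ on $(0,\infty)^2$ satisfying the stated weighted $L^2$ bound and define $G$ by the double Laplace integral. A Cauchy--Schwarz estimate against the weight $t_1^{-\alpha_1-1}t_2^{-\alpha_2-1}$ shows that $t\mapsto g(t)e^{-(y_1t_1+y_2t_2)}$ lies in $L^1\cap L^2((0,\infty)^2)$ for every $y_1,y_2>0$, so $G$ is well defined and, by differentiation under the integral sign, holomorphic on $\Pi^+\times\Pi^+$. For fixed $y_1,y_2>0$ the map $(x_1,x_2)\mapsto G(x_1+iy_1,x_2+iy_2)$ is, up to a factor $(2\pi)^2$, the inverse Fourier transform of $t\mapsto g(t)e^{-(y_1t_1+y_2t_2)}$, so Plancherel gives
\[\int_{\mathbb R^2}|G(x_1+iy_1,x_2+iy_2)|^2\,dx_1dx_2=(2\pi)^2\int_{(0,\infty)^2}|g(t)|^2e^{-2(y_1t_1+y_2t_2)}\,dt_1dt_2.\]
Multiplying by $y_1^{\alpha_1}y_2^{\alpha_2}$, integrating in $(y_1,y_2)$, invoking Tonelli (the integrand is nonnegative), and computing $\int_0^\infty e^{-2yt}y^{\alpha}\,dy=\Gamma(\alpha+1)(2t)^{-\alpha-1}$ in each variable would yield simultaneously the finiteness of $\|G\|_{A^2_{\vec\alpha}(\Pi^+\times\Pi^+)}$ and the claimed isometric identity, the constant $c_{\vec\alpha}$ factoring as the product of the two one–variable constants from Proposition \ref{PW}.

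For the direction (1)$\Rightarrow$(2), I would take $G\in A^2_{\vec\alpha}(\Pi^+\times\Pi^+)$ and, using the sub–mean value inequality for $|G|^2$ on polydiscs contained in $\Pi^+\times\Pi^+$, conclude that the slice $G_{y_1,y_2}(x_1,x_2):=G(x_1+iy_1,x_2+iy_2)$ lies in $L^2(\mathbb R^2)$ for every $(y_1,y_2)\in(0,\infty)^2$; let $h_{y_1,y_2}$ denote its Fourier transform. Freezing $z_2\in\Pi^+$ and shifting the contour in the $x_1$–integration — the classical one–variable manoeuvre, using the Cauchy--Riemann equations in $z_1$ and the decay of $G$ — shows that $e^{y_1t_1}h_{y_1,y_2}(t_1,t_2)$ is independent of $y_1$ and vanishes for $t_1\le0$; repeating the argument in the second variable produces a jointly measurable $g$ on $(0,\infty)^2$ with $h_{y_1,y_2}(t)=g(t)e^{-(y_1t_1+y_2t_2)}$. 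Plancherel together with the same Tonelli/Gamma computation as above turns the finiteness of the $A^2_{\vec\alpha}$–norm into $\int_{(0,\infty)^2}|g(t)|^2 t_1^{-\alpha_1-1}t_2^{-\alpha_2-1}\,dt_1dt_2<\infty$, and Fourier inversion recovers the representation $G(z)=(2\pi)^{-2}\int_{(0,\infty)^2} g(t)e^{i(z_1t_1+z_2t_2)}\,dt_1dt_2$, which is of the required form after an innocuous rescaling of $g$.

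The step I expect to be the main obstacle is the implication (1)$\Rightarrow$(2): one has to check carefully that the partial Fourier transforms of the slices $G_{y_1,y_2}$ all have the product–exponential shape with one and the same $g$, that $g$ can be chosen jointly measurable, and that all interchanges of integration are legitimate — the usual, slightly technical, core of a Paley--Wiener theorem, here doubled by the two variables. A cleaner route that sidesteps this would be to observe that $A^2_{\vec\alpha}(\Pi^+\times\Pi^+)$ is the completed Hilbert–space tensor product of $A^2_{\alpha_1}(\Pi^+)$ and $A^2_{\alpha_2}(\Pi^+)$ — equivalently, that its reproducing kernel is the product of the two one–variable Bergman kernels — so that the two copies of the isometry furnished by Proposition \ref{PW} tensor together; the Laplace representation is then verified first on elementary tensors $G_1(z_1)G_2(z_2)$ and extended by density and continuity. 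Either way the two–dimensional statement follows from the one–dimensional one.
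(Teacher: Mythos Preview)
The paper does not supply its own proof of Proposition~\ref{PWProd}: it is recorded as a known fact with a reference to Stein--Weiss \cite{SW}, just as the one--variable version (Proposition~\ref{PW}) is quoted from \cite{BBGNPR}. So there is nothing to compare against; your proposal goes further than the paper by actually writing out an argument.

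Your strategy is the standard one and is essentially correct. The direction $(2)\Rightarrow(1)$ is unproblematic: Cauchy--Schwarz gives the local absolute convergence needed for holomorphy, and the Plancherel/Tonelli/Gamma computation you describe produces both finiteness of the norm and the isometric identity in one stroke. For $(1)\Rightarrow(2)$ your diagnosis of the difficulty is accurate. The contour--shifting manoeuvre you sketch is really a Hardy--space argument and is slightly awkward to justify directly for Bergman functions, which need not have uniform $L^2$ bounds on horizontal lines. The cleaner execution is the one you mention yourself at the end: either iterate Proposition~\ref{PW} (by Fubini, for almost every $z_2$ the slice $z_1\mapsto G(z_1,z_2)$ lies in $A^2_{\alpha_1}(\Pi^+)$, apply the one--variable theorem, then repeat in $z_2$ and use measurable selection), or argue via the tensor product / reproducing kernel identification $A^2_{\vec\alpha}(\Pi^+\times\Pi^+)\cong A^2_{\alpha_1}(\Pi^+)\,\widehat\otimes\,A^2_{\alpha_2}(\Pi^+)$. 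Either route is sound and yields exactly the statement recorded in the paper.
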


\section{Radial Carleson measures for $H^2(T_\Omega)$}
In view of the second assertion of Proposition \ref{estimate}, testing on the weighted Bergman kernel functions $$\Delta^{-\alpha} (z-\bar w) \quad (w\in T_\Omega),$$ we obtain at once the following necessary condition for $\mu$ be a Carleson measure for $H^2 (T_\Omega):$ there exists a positive constant $C=C(\alpha, \mu)$ such that
\begin{equation} \label{nec}
\int_{T_\Omega} |\Delta^{-\alpha} (z-\bar w)|^2 d\mu (z) \leq C\Delta^{-2\alpha + \frac nr} (\Im m \hskip 1truemm w)
\end{equation}
for every $w\in T_\Omega$, provided $\alpha > \frac nr - \frac 12.$ 
\vskip .2cm
The aim of the present section is to investigate whether this necessary condition is also sufficient as in the one-dimensional case.
We consider particular radial measures $\mu$ on $T_\Omega$ of the form $d\mu (x+iy)=\varphi (y)dxdy,$ where $\varphi$ is a positive measurable function on $\Omega.$ 

Using the Plancherel Theorem, one obtains that the Carleson measure property (\ref{carlhardy}) may be expressed in this particular case as
$$
\int_\Omega \left(\int_\Omega |f(t)|^2 e^{-2\langle t, y\rangle}dt\right)\varphi (y)dy = \int_\Omega \left(\int_\Omega \varphi (y)e^{-2\langle t, y\rangle}dy\right)|f(t)|^2 dt\\
$$
\begin{equation}\label{carlhardypart}
\leq C\int_\Omega |f(t)|^2 dt.
\end{equation}

On the other hand, in view of the first assertion of Proposition \ref{estimate}, the necessary condition inequality (\ref{nec}) may be expressed as
\begin{equation} \label{necpart}
\int_\Omega \Delta^{-2\alpha + \frac nr} (y+t)\varphi (y)dy \leq C_{\alpha, \varphi} \Delta^{-2\alpha + \frac nr} (t)
\end{equation}
for every $t\in \Omega.$

We shall prove the following theorem:

\begin{theorem}\label{H}
Let $\varphi$ be a positive measurable function on the cone $\Omega.$ The following three assertions are equivalent.
\begin{enumerate}
\item
The measure $d\mu(x+iy)=dx\varphi(y)dy$ is a Carleson measure for the Hardy space $H^2 (T_\Omega).$
\item
The function $\varphi$ is integrable on $\Omega.$
\item
For some (all) $\alpha > \frac nr - \frac 12, $ there exists a positive constant $C_{\alpha, \varphi}$ such that (\ref{necpart}) holds for every $t\in \Omega.$
\end{enumerate}
\end{theorem}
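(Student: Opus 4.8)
The plan is to establish the cycle of implications $(1)\Rightarrow(3)\Rightarrow(2)\Rightarrow(1)$, reducing everything to the Plancherel reformulation \eqref{carlhardypart} and the necessary-condition reformulation \eqref{necpart}. The implication $(1)\Rightarrow(3)$ is already in hand: it is precisely the necessary condition obtained by testing on the weighted Bergman kernels via Proposition \ref{estimate}, as recorded in \eqref{nec} and rewritten as \eqref{necpart}. So the two real tasks are deriving integrability of $\varphi$ from \eqref{necpart}, and then showing that integrability of $\varphi$ forces the Carleson estimate \eqref{carlhardypart}.

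\textbf{From $(3)$ to $(2)$.} Assume \eqref{necpart} holds for a single $\alpha > \frac nr - \frac 12$, i.e. with $\beta := 2\alpha - \frac nr > \frac nr - 1$,
\[
\int_\Omega \Delta^{-\beta}(y+t)\,\varphi(y)\,dy \;\le\; C\,\Delta^{-\beta}(t) \qquad (t\in\Omega).
\]
I would let $t\to 0$ along a fixed ray inside $\Omega$. On any compact subset $K$ of $\Omega$, for $t$ small we have $y+t$ ranging over a set on which $\Delta^{-\beta}(y+t)$ is bounded below by a positive constant depending only on $K$; meanwhile $\Delta^{-\beta}(t)$ stays bounded as long as $t$ is bounded away from $0$... but since we want $t\to 0$ this needs care. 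The cleaner route: the inequality says the function $t\mapsto \Delta^{\beta}(t)\int_\Omega \Delta^{-\beta}(y+t)\varphi(y)\,dy$ is bounded on $\Omega$. Fix $e$ the identity of the Jordan algebra (so $e\in\Omega$). Taking $t = \varepsilon e$ and using that $\Delta^{-\beta}(y+\varepsilon e)\uparrow \Delta^{-\beta}(y)$ as $\varepsilon\downarrow 0$ (monotone convergence, since $y\mapsto y+\varepsilon e$ decreases in the cone order and $\Delta^{-\beta}$ is decreasing for the order when $\beta>0$ — this monotonicity of $\Delta$ under the cone order is standard, see \cite{FK}), while $\Delta^{-\beta}(\varepsilon e) = \varepsilon^{-r\beta/?}\Delta^{-\beta}(e)$ stays... no: as $\varepsilon\to 0$ the right side blows up, which is unhelpful. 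Instead I take $t=\varepsilon e$ with $\varepsilon\to\infty$: then $\Delta^{-\beta}(\varepsilon e)\to 0$, so we would need $\int_\Omega \Delta^{-\beta}(y+\varepsilon e)\varphi(y)\,dy\to 0$; this again gives no lower bound on $\int\varphi$. The correct idea is a \emph{local} one: restrict the integral on the left to $y\in K$ for a fixed compact $K\subset\Omega$ with nonempty interior, and choose $t$ with $\Im m$ comparable to the ``center'' of $K$, say $t=e$. Then $\Delta^{-\beta}(y+e)\asymp 1$ for $y\in K$, and $\Delta^{-\beta}(e)$ is a fixed constant, so \eqref{necpart} yields $\int_K \varphi(y)\,dy \le C'$ with $C'$ independent of $K$; letting $K\uparrow\Omega$ gives $\varphi\in L^1(\Omega)$. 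The point requiring attention is the uniformity of the implicit constants in $y\in K$, which follows from continuity and positivity of $\Delta$ on $\Omega$ together with the dilation/translation behaviour of $\Delta$ — all from \cite{FK}.

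\textbf{From $(2)$ to $(1)$.} Assume $c:=\int_\Omega \varphi(y)\,dy<\infty$. By \eqref{carlhardypart} it suffices to show $\int_\Omega \varphi(y)e^{-2\langle t,y\rangle}\,dy \le C$ for all $t\in\Omega$, since then the weight multiplying $|f(t)|^2$ is bounded by $C$ and the Carleson inequality follows with constant $C$. But for $t\in\Omega$ and $y\in\Omega$ one has $\langle t,y\rangle > 0$ (the cone is self-dual), hence $e^{-2\langle t,y\rangle}\le 1$, so
\[
\int_\Omega \varphi(y)e^{-2\langle t,y\rangle}\,dy \;\le\; \int_\Omega \varphi(y)\,dy \;=\; c,
\]
uniformly in $t$. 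This is immediate and, combined with the Plancherel identity in \eqref{carlhardypart}, gives $\int_{T_\Omega}|F|^2\,d\mu \le (2\pi)^{-n} c\,\|F\|_{H^2}^2$ for every $F\in H^2(T_\Omega)$ via Proposition \ref{Paley-Wiener}, which is $(1)$.

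\textbf{Main obstacle.} The only genuinely delicate point is $(3)\Rightarrow(2)$: one must extract from the single inequality \eqref{necpart}, valid for all $t\in\Omega$, the honest conclusion $\varphi\in L^1$. The naive limits $t\to 0$ or $t\to\infty$ along a ray are inconclusive, so the argument must be localized to compact subcones and rely on the uniform two-sided bounds $\Delta^{-\beta}(y+t)\asymp_K 1$ for $y$ in a compact set and $t=e$ (or any fixed interior point), exhausting $\Omega$ by such sets. Once the localization is set up correctly using the basic properties of $\Delta$ from \cite{FK}, the estimate is routine; I expect this to be the step the authors spell out in most detail, while $(1)\Rightarrow(3)$ and $(2)\Rightarrow(1)$ are essentially one line each given the preliminaries.
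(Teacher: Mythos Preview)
Your treatment of $(2)\Rightarrow(1)$ and $(1)\Rightarrow(3)$ is correct and essentially identical to the paper's. The gap is in $(3)\Rightarrow(2)$, precisely at the claim that the bound $\int_K\varphi\le C'$ holds with $C'$ \emph{independent of $K$}. With $t=e$ fixed, the only lower bound you get is $\Delta^{-\beta}(y+e)\ge c_K$ for $y\in K$, where $c_K=\inf_{y\in K}\Delta^{-\beta}(y+e)$; this infimum tends to $0$ as $K$ exhausts $\Omega$ (since $\Delta^{-\beta}(y+e)\to 0$ as $y\to\infty$ in $\Omega$). So your displayed conclusion $\int_K\varphi\le C'$ carries a hidden $K$-dependence through $c_K$, and the exhaustion step fails.

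The paper's fix is to let $t$ move with the region of integration, but in a way that keeps the comparison constants \emph{absolute}. One restricts the $y$-integral to the order-interval $\{y\in\Omega: y\prec t\}$ (i.e.\ $t-y\in\Omega$). On that set one has $t\prec y+t\prec 2t$, hence $\Delta(t)\le\Delta(y+t)\le 2^r\Delta(t)$, so $\Delta^{-\beta}(y+t)/\Delta^{-\beta}(t)$ is pinched between two constants depending only on $\beta$ and $r$, \emph{not} on $t$. Plugging this into \eqref{necpart} gives
\[
\int_{\{y\prec t\}}\varphi(y)\,dy \;\le\; C_\beta\,C_{\alpha,\varphi}
\]
uniformly in $t\in\Omega$. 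Finally one sends $t=N\mathbf{y_0}\to\infty$ along a ray through a base point $\mathbf{y_0}$ and uses monotone convergence to conclude $\int_\Omega\varphi<\infty$. Your instinct to localize and exhaust was right; what was missing is that the localizing sets must be the order-intervals $\{y\prec t\}$, and $t$ must go to infinity, so that the ratio $\Delta^{-\beta}(y+t)/\Delta^{-\beta}(t)$ is controlled by the cone order rather than by compactness.
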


\begin{proof}
We shall prove the following implications: $(2)\Rightarrow (1) \Rightarrow (3) \Rightarrow (2).$\\
We first show the implication $(2)\Rightarrow (1).$ In view of (\ref{carlhardypart}), the measure $d\mu (x+iy)=dx\varphi (y)dy,$ where $\varphi$ is a positive measurable function on $\Omega,$ is a Carleson measure for $H^2 (T_\Omega)$ if and only if the function $$t\in \Omega \mapsto \left\{\int_\Omega \varphi (y)e^{-2\langle t, y\rangle}dy\right\}^{\frac 12}$$ is a multiplier of $L^2 (\Omega).$ The latter property is valid if and only if the relevant function is bounded on $\Omega.$ It is then clear that assertion (2) implies assertion (1).\\
\indent
Prior to the statement of the theorem, we proved the implication $(1) \Rightarrow (3)$ which amounts to the fact that (\ref{necpart}) is a necessary condition for (1).\\
\indent
We finally show the implication $(3) \Rightarrow (2).$  We recall the following order relation $\prec$ on $\Omega.$ We write $x\prec y$ if $y-x\in \Omega.$ It is well known that $\Delta(x)\leq \Delta(y)$ whenever $x\prec y.$ Consequently, if $y\prec t,$ then $y+t\prec 2t$ and hence $\Delta(t)\leq \Delta(y+t)\leq \Delta (2t)=C\Delta(t).$\\
The assertion (3) may be written as
$$\sup \limits_{t\in \Omega} \int_\Omega \frac {\Delta^{-2\alpha + \frac nr} (y+t)}{\Delta^{-2\alpha + \frac nr} (t)}\varphi (y)dy \leq C_{\alpha, \varphi}.$$
We obtain:
$$\sup \limits_{t\in \Omega}\int_{y\in \Omega: y\prec t} \varphi (y)dy \leq C_\alpha \sup \limits_{t\in \Omega} \int_\Omega \frac {\Delta^{-2\alpha + \frac nr} (y+t)}{\Delta^{-2\alpha + \frac nr} (t)}\varphi (y)dy\leq C_{\alpha, \varphi}.$$
We call $\mathbf {y_0}$ a base point of $\Omega.$ Then
$$\int_\Omega \varphi (y)dy = \lim \limits_{N\rightarrow \infty} \int_{y\in \Omega: y\prec N\mathbf {y_0}} \varphi (y)dy \leq C_\alpha \sup \limits_N \int_{y\in \Omega: y\prec N\mathbf {y_0}} \varphi (y)dy\leq C_{\alpha, \varphi}.$$ 
Here, the equality follows from the Lebesgue monotone convergence theorem. This concludes the proof of the implication $(3) \Rightarrow (2).$ 
\end{proof}
\section{Embedding derivatives of Hardy spaces into Lebesgue spaces}
In \cite{L}, D. Luecking characterized those measures $\mu$ on the upper half-plane $\Pi^+$ of the complex plane such that differentiation $\frac {d^m}{dz^m}$ of order $m=0, 1,\cdots$  maps $H^p (\Pi^+)$ boundedly into $L^q (\Pi^+, d\mu),$ where $0<p, q<\infty.$\\
\indent
In our setting, this question is also still open. Note that in this case, we replace the differential operator $\frac d{dz}$ by the box operator $\Box$ defined in the introduction.  We shall consider here only the case $p=q=2.$ 
\subsection{Embedding large derivatives of $H^2(T_\Omega)$ into Hilbert-Lebesgue spaces}

We deduce the following corollary from Proposition \ref{PWBerg}.

\begin{corollary}
We suppose that $m$ is an integer such that $2m>\frac nr-1$. Then the differential operator $\Box^{m}$ is a bounded isomorphism from $H^2 (T_\Omega)$ to $A_{2m}^2 (T_\Omega).$
\end{corollary}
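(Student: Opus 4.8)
The plan is to use the Paley–Wiener description of $H^2(T_\Omega)$ (Proposition \ref{Paley-Wiener}) together with the one for $A_{2m}^2(T_\Omega)$ (Proposition \ref{PWBerg}), and to observe that applying $\Box^m$ corresponds, on the Laplace-transform side, to multiplication by $\Delta^m$. First I would take $F\in H^2(T_\Omega)$ and write $F(z)=\int_\Omega f(t)e^{i\langle t,z\rangle}\,dt$ with $f\in L^2(\Omega)$. Differentiating under the integral sign and using the defining property (\ref{bbox}) of the Box operator, one gets
\[
\Box^m F(z)=\int_\Omega \Delta^m(t)\,f(t)\,e^{i\langle t,z\rangle}\,dt,
\]
i.e. $\Box^m F$ is the Laplace transform of $g:=\Delta^m f$. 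The point is then that $\|F\|_{H^2(T_\Omega)}^2=(2\pi)^n\int_\Omega|f(t)|^2\,dt$, whereas by Proposition \ref{PWBerg} (with $\nu=2m$, which is admissible precisely because the hypothesis $2m>\frac nr-1$ guarantees $\nu>\frac nr-1$) one has $g\in L^2\big(\Omega,\frac{dy}{\Delta^{2m}(y)}\big)$ with
\[
\|\Box^m F\|_{A_{2m}^2}^2=(2\pi)^n\Gamma_\Omega(2m)\int_\Omega|g(t)|^2\frac{dt}{\Delta^{2m}(t)}
=(2\pi)^n\Gamma_\Omega(2m)\int_\Omega|f(t)|^2\,dt.
\]
Hence $\|\Box^m F\|_{A_{2m}^2}=\sqrt{\Gamma_\Omega(2m)}\,\|F\|_{H^2(T_\Omega)}$, so $\Box^m$ is bounded and bounded below on $H^2(T_\Omega)$.

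For surjectivity, I would run the computation in reverse: given $H\in A_{2m}^2(T_\Omega)$, Proposition \ref{PWBerg} provides $g\in L^2\big(\Omega,\frac{dy}{\Delta^{2m}(y)}\big)$ with $H=\int_\Omega g(t)e^{i\langle t,z\rangle}\,dt$; then $f:=\Delta^{-m}g$ lies in $L^2(\Omega)$ because $\int_\Omega|f(t)|^2\,dt=\int_\Omega|g(t)|^2\Delta^{-2m}(t)\,dt<\infty$, so its Laplace transform $F$ is in $H^2(T_\Omega)$ by Proposition \ref{Paley-Wiener}, and $\Box^m F=H$ by the same differentiation-under-the-integral argument. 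Combined with the two-sided norm estimate, this shows $\Box^m:H^2(T_\Omega)\to A_{2m}^2(T_\Omega)$ is a bounded isomorphism (indeed a scalar multiple of an isometry).

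The main point requiring care — the only genuine obstacle — is justifying the interchange of $\Box^m$ with the integral, i.e. that $\Box^m\int_\Omega f(t)e^{i\langle t,z\rangle}\,dt=\int_\Omega \Delta^m(t)f(t)e^{i\langle t,z\rangle}\,dt$. For fixed $z=x+iy$ with $y\in\Omega$, the integrand and all its derivatives up to order $rm$ are dominated, uniformly for $z$ in a neighbourhood, by $|f(t)|\,\Delta^m(Ct)e^{-\langle t,y_0\rangle}$ for a suitable $y_0\in\Omega$ and constant $C$, and this is integrable since $f\in L^2(\Omega)\subset L^2_{\mathrm{loc}}$ and $\Delta^m(t)e^{-\langle t,y_0\rangle}$ decays rapidly (here one uses the standard estimate that $\int_\Omega\Delta^{a}(t)e^{-\langle t,y\rangle}\,dt<\infty$ for $y\in\Omega$ and $a$ bounded below, together with Cauchy–Schwarz); differentiation under the integral sign is then legitimate. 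Once this is in place the rest is bookkeeping with the two Paley–Wiener identities.
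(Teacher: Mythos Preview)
Your proof is correct and follows essentially the same route as the paper: both use the Paley--Wiener representations for $H^2(T_\Omega)$ and $A_{2m}^2(T_\Omega)$, identify the action of $\Box^m$ as multiplication by $\Delta^m$ on the Laplace-transform side, and run the computation in both directions to obtain the bounded isomorphism (with the explicit constant $\Gamma_\Omega(2m)$). Your treatment is in fact slightly more careful, since you explicitly address the justification for differentiating under the integral sign, which the paper takes for granted.
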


\begin{proof}
First by Proposition \ref{Paley-Wiener}, each $F\in H^2 (T_\Omega)$ is the Laplace transform of a measurable function $f:\Omega \rightarrow \mathbb C$
with the equality $||F||_{H^2 (T_\Omega)}^2= (2\pi)^n \int_\Omega |f(\xi)|^2d\xi.$ Applying the operator $\Box$ to $F$  $m$ times, we obtain
$$\Box^{m}F (z)=\int_\Omega \Delta^{m}(\xi)e^{i\langle z,\xi\rangle}f(\xi)d\xi, \quad z\in T_\Omega.$$
Put $g(\xi):=\Delta^{m}(\xi)f(\xi)$. In view of Proposition \ref{PWBerg}, we have 
$$
\begin{array}{clcr}
||\Box^{m}F||_{A_{2m}^2 (T_\Omega)}^2&=(2\pi)^n \Gamma_\Omega (2m)\int_\Omega |g(\xi)|^2\frac {d\xi}{\Delta^{2m}(\xi)}\\
&=(2\pi)^n \Gamma_\Omega (2m)\int_\Omega |f(\xi)|^2d\xi\\
&=\Gamma_\Omega (2m)||F||_{H^2 (T_\Omega)}^2.
\end{array} 
$$
Conversely, if $G\in A_{2m}^2(T_\Omega)$, then by Proposition \ref{PWBerg},
$$G (z)=\int_\Omega g(\xi)e^{i\langle z,\xi\rangle}d\xi, \quad z\in T_\Omega$$
for some $g\in L^2 \left (\Omega, \frac{d\xi}{\Delta^{2m}(\xi)}\right )$, with $||G||_{A_{2m}^2 (T_\Omega)}^2=(2\pi)^n \Gamma_\Omega (2m)\int_{\Omega}|g(\xi)|^2\frac{d\xi}{\Delta^{2m}(\xi)}$. 
\vskip .1cm
Put $f(\xi):=\Delta^{-m}(\xi)g(\xi)$. Then $f\in L^2(\Omega, d\xi)$ and if we define $F$ by 
 $$F (z)=\int_\Omega f(\xi)e^{i\langle z,\xi\rangle}d\xi, \quad z\in T_\Omega,$$
 then $F$ is well-defined and one easily checks that $\Box^mF=C_mG$, and $||F||_{H^2 (T_\Omega)}^2= (2\pi)^n \int_\Omega |f(\xi)|^2d\xi=(2\pi)^n\|g\|_{L^2 \left (\Omega, \frac{d\xi}{\Delta^{2m}(\xi)}\right )}^2.$
The proof is complete.
\end{proof}

We then obtain the following characterization of $m$-box Carleson measures for $H^2(T_\Omega)$ for large integer $m$.

\begin{theorem}
We suppose that $m$ is an integer such that $2m>\frac nr-1$. Let $\mu$ be a positive Borel measure on $T_\Omega.$ Then the following two assertions are equivalent.
\begin{enumerate}
\item
There exists a positive constant $A=A(m)$ such that
$$\int_{T_\Omega} \left \vert \Box^{m}F\right \vert^2d\mu \leq A||F||_{H^2 (T_\Omega)}^2$$
for each $F\in H^2 (T_\Omega);$
\item
$\mu$ is a Carleson measure for the weighted Bergman spaces $A_{2m}^2 (T_\Omega).$
\end{enumerate}
\end{theorem}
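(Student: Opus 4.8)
The plan is to deduce this theorem directly from the preceding corollary, which asserts that $\Box^m : H^2(T_\Omega) \to A_{2m}^2(T_\Omega)$ is a bounded isomorphism. The key observation is that the two conditions in the theorem are, by unwinding the definitions, statements about the same quantity $\int_{T_\Omega} |G|^2\,d\mu$ evaluated on related families of functions, and the isomorphism lets us pass between them.

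First I would show $(2)\Rightarrow(1)$. Assume $\mu$ is a Carleson measure for $A_{2m}^2(T_\Omega)$, i.e.\ there is $C>0$ with $\int_{T_\Omega}|G|^2\,d\mu \le C\,\|G\|_{A_{2m}^2(T_\Omega)}^2$ for all $G \in A_{2m}^2(T_\Omega)$. Given $F \in H^2(T_\Omega)$, apply this with $G = \Box^m F$, which lies in $A_{2m}^2(T_\Omega)$ by the corollary; the corollary also gives $\|\Box^m F\|_{A_{2m}^2(T_\Omega)}^2 = \Gamma_\Omega(2m)\,\|F\|_{H^2(T_\Omega)}^2$. Hence $\int_{T_\Omega}|\Box^m F|^2\,d\mu \le C\,\Gamma_\Omega(2m)\,\|F\|_{H^2(T_\Omega)}^2$, which is exactly $(1)$ with $A = C\,\Gamma_\Omega(2m)$.

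Next I would show $(1)\Rightarrow(2)$. Assume $(1)$ holds with constant $A$. Let $G \in A_{2m}^2(T_\Omega)$ be arbitrary. Since $\Box^m$ is onto (from the corollary), there exists $F \in H^2(T_\Omega)$ with $\Box^m F = C_m G$ for the nonzero constant $C_m$ produced in the proof of the corollary, and moreover $\|F\|_{H^2(T_\Omega)}^2 = \frac{1}{\Gamma_\Omega(2m)}\|\Box^m F\|_{A_{2m}^2(T_\Omega)}^2 = \frac{|C_m|^2}{\Gamma_\Omega(2m)}\|G\|_{A_{2m}^2(T_\Omega)}^2$. Plugging this $F$ into $(1)$ gives $|C_m|^2\int_{T_\Omega}|G|^2\,d\mu = \int_{T_\Omega}|\Box^m F|^2\,d\mu \le A\,\|F\|_{H^2(T_\Omega)}^2 = \frac{A|C_m|^2}{\Gamma_\Omega(2m)}\|G\|_{A_{2m}^2(T_\Omega)}^2$; dividing by $|C_m|^2$ yields the Carleson estimate for $A_{2m}^2(T_\Omega)$ with constant $A/\Gamma_\Omega(2m)$.

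I do not expect any serious obstacle here: once the corollary is in hand, the argument is a formal transport of the embedding inequality across the isomorphism, and the only points requiring a word of care are (a) checking that the constants $\Gamma_\Omega(2m)$ and $C_m$ are finite and nonzero, which is immediate since $2m > \frac nr - 1$ ensures $\Gamma_\Omega(2m)$ is defined, and (b) noting that in $(1)\Rightarrow(2)$ one must test against \emph{every} $G\in A_{2m}^2(T_\Omega)$, which is legitimate precisely because $\Box^m$ is surjective onto $A_{2m}^2(T_\Omega)$ — the hypothesis $2m>\frac nr-1$ is what makes this surjectivity available.
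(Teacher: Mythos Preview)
Your proposal is correct and follows essentially the same approach as the paper: both arguments reduce the equivalence to the corollary that $\Box^m:H^2(T_\Omega)\to A_{2m}^2(T_\Omega)$ is a bounded isomorphism, with the paper compressing the two directions into a single sentence while you spell them out explicitly and track the constants.
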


\begin{proof}
According to the previous corollary, assertion (1) is equivalent to the following assertion: there exists a positive constant $C$ such that
$$\int_{T_\Omega} |G|^2d\mu \leq C||G||_{A_{2m}^2 (T_\Omega)}^2$$
for each $G\in A_{2m}^2 (T_\Omega).$ The latter assertion is clearly assertion (2).

\end{proof}

We recall that the Carleson measures for Bergman spaces on tube domains over symmetric cones were characterized in \cite{NS} in terms of a geometrical condition on Bergman balls..
\vskip .3cm
When the integer $m$ is such that $0\le m\le \frac 12(\frac nr-1)$, the above techniques do not provide any answer. Nevertheless, in the following, we provide a full characterization when considering only some restricted measures in the setting of the tube domain over the Lorentz cone of dimension three.

\subsection{Two examples of $m$-box Carleson measures for the Hardy space $H^2$ on the tube domain over the Lorentz cone $\Lambda_3$ of $\mathbb R^3$}
{\textbf {1. First class of examples: measures of the form $\mu(z_1)\delta_O (z_2, z_3).$}}
We denote by $\delta_O (z_2, z_3)$ the Dirac measure at the origin in $\mathbb C\times \mathbb C.$ We shall characterize the positive measures $\mu(z_1)$ on the upper half-plane $\Pi^+$ for which there exists a positive constant  $C$ such that for each $F\in H^2 (T_{\Lambda_3}),$ the following estimate holds.
$$
\begin{array}{clcr}
\int_{T_{\Lambda_3}} |\Box^mF(z_1,z_2,z_3)|^2 d\mu (z_1)d\delta_O (z_2, z_3) &= \int_{\Pi^+} |(\Box^mF)(z_1, 0, 0)|^2 d\mu(z_1)\\
& \leq C||F||^2_{H^2 (T_{\Lambda_3})}.
\end{array}$$

\begin{definition}
\begin{itemize}
\item[(a)]
Let $F$ be a complex-valued function defined on $T_{\Lambda_3}.$ We call restriction of F to $\Pi^+,$ the function $RF: \Pi^+ \rightarrow \mathbb C$ defined by
$$(RF)(z_1)=F(z_1,0,0).$$
\item[(b)]
Let $G$ be a complex-valued  function  defined on $\Pi^+.$ We say that the function $F:T_{\Lambda_3}\rightarrow \mathbb C$ is an extension of $G$ if $RF=G.$
\end{itemize}
\end{definition}

We shall use the following result.

\begin{proposition}\label{restrict}
\begin{itemize}
\item[(1)]
The following estimate holds:  
$$||R\Box^mF||_{A^2_{4m+1} (\Pi^+)}\leq \frac {\Gamma (4m+2)}{2^{4m+4}\pi (2m+1)} ||F||^2_{H^2 (T_{\Lambda_3})}$$
for every $F\in  H^2 (T_{\Lambda_3})$.
\item[(2)]
Conversely, for every function $G\in A^2_{4m+1} (\Pi^+)$, there exists a function $F\in H^2 (T_{\Lambda_3})$ such that $\Box^mF$ is an extension of $G$. Moreover,
$$||F||_{H^2 (T_{\Lambda_3})}=\frac{m+1}{\pi \sqrt{2\Gamma(4m+2)}}||G||_{A^2_{4m+1} (\Pi^+)}.$$ 
\end{itemize}
\end{proposition}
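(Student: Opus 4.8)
The plan is to reduce everything to the Paley--Wiener pictures: Proposition~\ref{Paley-Wiener} for $H^2(T_{\Lambda_3})$, Proposition~\ref{PW} for $A^2_{4m+1}(\Pi^+)$, and the explicit action of $\Box$ on Laplace transforms from the previous subsection. Recall that for the Lorentz cone $\Lambda_3$ we have $n=3$, $r=2$, so $\frac nr=\frac 32$ and $\Delta(y)=y_1^2-y_2^2-y_3^2$. Given $F\in H^2(T_{\Lambda_3})$, write $F(z)=\int_{\Lambda_3}f(\xi)e^{i\langle\xi,z\rangle}\,d\xi$ with $(2\pi)^3\int_{\Lambda_3}|f|^2\,d\xi=\|F\|_{H^2}^2$, and then $\Box^mF(z)=\int_{\Lambda_3}\Delta^m(\xi)f(\xi)e^{i\langle\xi,z\rangle}\,d\xi$. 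Setting $z=(z_1,0,0)$ collapses the exponential to $e^{i\xi_1 z_1}$ (since $\langle\xi,(z_1,0,0)\rangle=\xi_1 z_1$ with the self-dual inner product), so
$$
(R\Box^mF)(z_1)=\int_{\Lambda_3}\Delta^m(\xi)f(\xi)e^{i\xi_1 z_1}\,d\xi
=\int_0^\infty\Big(\int_{\{\xi\in\Lambda_3:\,\xi_1=s\}}\Delta^m(\xi)f(\xi)\,d\xi_2\,d\xi_3\Big)e^{isz_1}\,ds.
$$
Thus $R\Box^mF$ is the one-dimensional Laplace transform of $g(s):=\int_{\xi_1=s}\Delta^m(\xi)f(\xi)\,d\xi_2\,d\xi_3$, and by Proposition~\ref{PW}, $\|R\Box^mF\|^2_{A^2_{4m+1}(\Pi^+)}=2\pi\,\Gamma(4m+2)\int_0^\infty|g(s)|^2\,s^{-(4m+2)}\,ds$.

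For part~(1), I would estimate $|g(s)|^2$ by Cauchy--Schwarz on the slice $\{\xi_1=s\}$: the slice is the disk $\{\xi_2^2+\xi_3^2<s^2\}$ on which $\Delta(\xi)=s^2-\xi_2^2-\xi_3^2$, so
$$
|g(s)|^2\le\Big(\int_{\xi_2^2+\xi_3^2<s^2}\Delta^{2m}(\xi)\,d\xi_2\,d\xi_3\Big)\Big(\int_{\xi_2^2+\xi_3^2<s^2}|f(\xi)|^2\,d\xi_2\,d\xi_3\Big).
$$
The first factor is computed in polar coordinates: $\int_0^s(s^2-\rho^2)^{2m}\,2\pi\rho\,d\rho=\pi\int_0^{s^2}u^{2m}\,du=\frac{\pi}{2m+1}s^{2(2m+1)}=\frac{\pi}{2m+1}s^{4m+2}$. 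Hence $|g(s)|^2\,s^{-(4m+2)}\le\frac{\pi}{2m+1}\int_{\xi_1=s}|f(\xi)|^2\,d\xi_2\,d\xi_3$, and integrating in $s$ gives $\int_0^\infty|g(s)|^2\,s^{-(4m+2)}\,ds\le\frac{\pi}{2m+1}\|f\|_{L^2(\Lambda_3)}^2=\frac{\pi}{2m+1}\cdot\frac{1}{(2\pi)^3}\|F\|_{H^2}^2$. Multiplying by $2\pi\,\Gamma(4m+2)$ yields $\|R\Box^mF\|^2_{A^2_{4m+1}(\Pi^+)}\le\frac{\Gamma(4m+2)}{2^{4m+4}\pi(2m+1)}\|F\|^2_{H^2}$ — wait, I should double-check the power of $2$: $\frac{2\pi\cdot\pi}{(2m+1)(2\pi)^3}=\frac{2\pi^2}{8\pi^3(2m+1)}=\frac{1}{4\pi(2m+1)}$, so the claimed constant $\frac{\Gamma(4m+2)}{2^{4m+4}\pi(2m+1)}$ forces the bookkeeping to absorb a factor; I would track the normalization of $\langle\cdot,\cdot\rangle$ and of $\Delta$ on $\Lambda_3$ carefully here, since that is exactly where the $2^{4m}$ enters (rescaling $\xi\mapsto\xi/2$ or the precise form of $\Delta$).

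For part~(2), given $G\in A^2_{4m+1}(\Pi^+)$ with $G(z_1)=\int_0^\infty g(s)e^{isz_1}\,ds$ and $\int_0^\infty|g(s)|^2 s^{-(4m+2)}\,ds<\infty$, I would build $f$ on $\Lambda_3$ by spreading $g(s)$ over the slice $\{\xi_1=s\}$ \emph{proportionally to the extremizer of the above Cauchy--Schwarz}, i.e. set $f(\xi):=c\,\Delta^m(\xi)\,g(\xi_1)\,s^{-(4m+2)}$ with the normalizing constant $c$ and $s=\xi_1$, chosen so that the induced function $\tilde g(s)=\int_{\xi_1=s}\Delta^m(\xi)f(\xi)\,d\xi_2\,d\xi_3$ equals $g(s)$; this uses the slice integral $\int_{\xi_1=s}\Delta^{2m}(\xi)\,d\xi_2\,d\xi_3=\frac{\pi}{2m+1}s^{4m+2}$ again, forcing $c=\frac{2m+1}{\pi}$ times normalization. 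One then checks $\int_{\Lambda_3}|f(\xi)|^2\,d\xi=c^2\int_0^\infty s^{-2(4m+2)}|g(s)|^2\big(\int_{\xi_1=s}\Delta^{2m}(\xi)\,d\xi_2\,d\xi_3\big)\,ds=c^2\cdot\frac{\pi}{2m+1}\int_0^\infty|g(s)|^2 s^{-(4m+2)}\,ds<\infty$, so $F:=$ Laplace transform of $f$ lies in $H^2(T_{\Lambda_3})$, and by construction $R\Box^mF=G$; finally one reads off the identity $\|F\|_{H^2}=\frac{m+1}{\pi\sqrt{2\Gamma(4m+2)}}\|G\|_{A^2_{4m+1}(\Pi^+)}$ — again the precise constant (and in particular the appearance of $m+1$ rather than something with $2m+1$) is a matter of carefully combining the constants from Propositions~\ref{Paley-Wiener} and~\ref{PW} with the slice computation, and that constant-chasing, together with pinning down the normalization of the cone data for $\Lambda_3$, is the only real obstacle; the structural argument is a routine Plancherel/Cauchy--Schwarz-with-equality computation.
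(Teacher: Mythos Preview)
Your part~(1) matches the paper's proof essentially step for step: Paley--Wiener for $H^2(T_{\Lambda_3})$, the action of $\Box^m$ as multiplication by $\Delta^m$, restriction to a one-dimensional Laplace transform, then Cauchy--Schwarz on the slice together with the computation $\int_{\xi_2^2+\xi_3^2<s^2}(s^2-\xi_2^2-\xi_3^2)^{2m}\,d\xi_2\,d\xi_3=\frac{\pi}{2m+1}s^{4m+2}$. The missing $2^{4m+2}$ is not a cone-normalization issue: the paper computes the $A^2_{4m+1}$ norm by Plancherel in $x_1$ followed by Fubini in $y_1$, so that the factor appears explicitly from $\int_0^\infty e^{-2sy_1}y_1^{4m+1}\,dy_1=\Gamma(4m+2)/(2s)^{4m+2}$, rather than by quoting Proposition~\ref{PW}.

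For part~(2) you take a genuinely different route. The paper spreads $g$ \emph{constantly} over each slice, setting
\[
f(t)=\frac{m+1}{\pi}\,\frac{g(t_1)}{t_1^{2m+2}},
\]
with the normalization fixed by the slice integral $\int_{t_2^2+t_3^2<t_1^2}\Delta^m(t)\,dt_2dt_3=\frac{\pi}{m+1}t_1^{2m+2}$ (exponent $m$, not $2m$). Your choice $f(\xi)=c\,\Delta^m(\xi)\,g(\xi_1)\,\xi_1^{-(4m+2)}$ is the Cauchy--Schwarz extremizer on each slice and therefore yields the \emph{minimal-norm} extension. Both constructions work, but they give different $H^2$ norms: a direct computation shows your $f$ satisfies $\|f\|_{L^2(\Lambda_3)}^2=\frac{2m+1}{\pi}\int_0^\infty|g(s)|^2 s^{-(4m+2)}\,ds$, while the paper's satisfies $\|f\|_{L^2(\Lambda_3)}^2=\frac{(m+1)^2}{\pi}\int_0^\infty|g(s)|^2 s^{-(4m+2)}\,ds$. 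So the $(m+1)$ versus $(2m+1)$ discrepancy you flagged is real and not a normalization artifact---it is precisely the difference between the two extensions, and no amount of constant-chasing will reconcile your $F$ with the stated equality. Your argument is correct and in fact sharper (it achieves equality in part~(1)); it simply does not reproduce the specific constant printed in the statement, which comes from the paper's non-extremal choice. For the intended application (Theorem~\ref{delta}) either extension suffices.
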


\begin{proof}

(1): We have $\langle z, t\rangle:=z_1t_1+z_2t_2+z_3t_3.$ Let $F\in  H^2 (T_{\Lambda_3}).$ In view of Proposition \ref{Paley-Wiener}, there exists a measurable function $f:\Lambda_3 \rightarrow \mathbb C$ such that
$$F(z) = \int_{\Lambda_3} f(t)e^{i\langle z, t\rangle}dt \quad \quad (z\in T_{\Lambda_3}),$$
with $|F||_{H^2 \left (T_{\Lambda_3}\right )}^2=8\pi^3 \int_{\Lambda_3} |f(t)|^2dt.$ Then
$$
\begin{array}{clcr}
(R\Box^mF)(x_1+iy_1)&=\int_{\Lambda_3} f(t_1, t_2, t_3)e^{it_1(x_1+iy_1)}\Delta^m(t)dt_1dt_2dt_3\\
& =\int_0^{\infty} \left (\int_{t_2^2+t_3^2 < t_1^2} f(t_1, t_2, t_3)\Delta^m(t)dt_2dt_3\right )e^{it_1(x_1+iy_1)}dt_1.
\end{array}
$$
An application of the Plancherel formula implies
$$\int_{-\infty}^{\infty} \left |(R\Box^mF)(x_1+iy_1)\right |^2dx_1 = 2\pi\int_0^{\infty} \left |\int_{t_2^2+t_3^2 < t_1^2} f(t_1, t_2, t_3)\Delta^m(t)dt_2dt_3\right |^2e^{-2t_1y_1}dt_1.$$ 
Next, when applying the Fubini Theorem, we obtain:

\Beas
||R\Box^mF||_{A^2_{4m+1} (\Pi^+)}^2 &=& \int_0^{\infty} \left (\int_{-\infty}^{\infty} \left |(R\Box^mF)(x_1+iy_1)\right |^2dx_1\right )y_1^{4m+1}dy_1 \\
&=& 2\pi\int_0^{\infty} \left |\int_{t_2^2+t_3^2 < t_1^2} f(t_1, t_2, t_3)\Delta^m(t)dt_2dt_3\right |^2\\ & &\left (\int_0^{+\infty} e^{-2t_1y_1}y_1^{4m+1}dy_1\right )dt_1\\
&=& \frac{\pi \Gamma (4m+2)}{2^{4m+1}}\int_0^{\infty} \left |\int_{t_2^2+t_3^2 < t_1^2} f(t_1, t_2, t_3)\Delta^m(t)dt_2dt_3\right |^2\frac {dt_1}{t_1^{4m+2}}.
\Eeas

We finally applying the Schwarz inequality to the integral with respect to $dt_2dt_3$, we get

\Beas 
&& \left \vert \int_{t_2^2+t_3^2 < t_1^2} f(t_1, t_2, t_3)\Delta^m(t)dt_2dt_3)\right \vert^2\\
&\leq& \left (\int_{t_2^2+t_3^2 < t_1^2} |f(t_1, t_2, t_3)|^2dt_2dt_3\right )\left (\int_{t_2^2+t_3^2 < t_1^2}\Delta^{2m}(t)dt_2dt_3\right )\\
&=& \frac \pi{2m+1} t_1^{4m+2} \int_{t_2^2+t_3^2 < t_1^2} |f(t_1, t_2, t_3)|^2dt_2dt_3.
\Eeas

We conclude that
\Beas
||R\Box^mF||_{A^2_{4m+1} (\Pi^+)}^2 &\leq& \frac {\pi^2 \Gamma (4m+2)}{2^{4m+1}(2m+1)} \int_0^{\infty} (\int_{t_2^2+t_3^2 < t_1^2} |f(t_1, t_2, t_3)|^2dt_2dt_3)dt_1\\
&=& \frac {\pi^2 \Gamma (4m+2)}{2^{4m+1}(2m+1)} \int_\Omega |f(t)|^2dt\\ &=& \frac { \Gamma (4m+2)}{2^{4m+4}\pi (2m+1)}|F||^2_{H^2 (T_{\Lambda_3})}.
\Eeas

(2): Conversely, let $G\in A^2_{4m+1} (\Pi^+).$ In view of Proposition \ref{PW}, there exists a measurable function $g:(0, \infty) \rightarrow \mathbb C$ such that
$$G(z_1) =\int_0^{\infty} g(t_1)e^{it_1z_1}dt_1 \quad \quad (z_1\in \Pi^+),$$
with $||G||_{A^2_{4m+1} (\Pi^+)}^2=2\pi \Gamma (4m+2)\int_0^{\infty} |g(t_1)|^2\frac {dt_1}{t_1^{4m+2}}.$
Let 
$$F(z):=\frac {m+1}{\pi} \int_\Omega \frac {g(t_1)}{t_1^{2m+2}} e^{i(z_1t_1+z_2t_2+z_3t_3)}dt_1dt_2dt_3.$$
We have 
\Beas
R\Box^mF(z) &=& \frac {m+1}{\pi}\int_\Omega \frac {g(t_1)}{t_1^{2m+2}} e^{iz_1t_1}\Delta^m(t)dt_1dt_2dt_3\\ &=& \frac {m+1}{\pi}\int_0^\infty \frac {g(t_1)}{t_1^{2m+2}} e^{iz_1t_1}\left(\int_{t_2^2+t_3^2 < t_1^2}\Delta^m(t)dt_2dt_3\right)dt_1\\ &=& \int_0^{\infty} g(t_1)e^{it_1z_1}dt_1.
\Eeas
That is,  $\Box^mF$ is an extension of $G$ to $T_{\Lambda_3}.$ Let us prove that $F\in H^2 (T_{\Lambda_3}).$ In view of Proposition \ref{Paley-Wiener}, it suffices to prove that
$$||F||_{H^2 \left (\Lambda_3 \right )}^2=\frac {(m+1)^2}{\pi^2}\int_\Omega \frac {|g(t_1)|^2}{t_1^{4m+4}} dt_1dt_2dt_3 <+\infty.$$
Proceeding as above, we obtain that 
\Beas
\int_\Omega \frac {|g(t_1)|^2}{t_1^{4m+4}} dt_1dt_2dt_3 &=& \int_0^\infty \frac {|g(t_1)|^2}{t_1^{4m+4}}\left(\int_{t_2^2+t_3^2<t_1^2}dt_2dt_3\right)dt_1\\ &=& \pi\int_0^\infty \frac {|g(t_1)|^2}{t_1^{4m+2}}dt_1\\ &=& \frac 1{2\Gamma(4m+2)}||G||^2_{A^2_{4m+2} (\Pi^+)}.
\Eeas

Hence
$$||F||_{H^2 \left (\Lambda_3 \right )}^2=\frac {(m+1)^2}{2\pi^2\Gamma(4m+2)}||G||^2_{A^2_{4m+2} (\Pi^+)}<\infty.$$
\end{proof}

Our main result in this subsection is the following.

\begin{theorem}\label{delta}
Let $\mu$ be a positive measure on $\Pi^+$, and $m\ge 0$ an integer. Then the following two properties are equivalent.
\begin{itemize}
\item[(1)]
$\mu (z_1)\delta_O (z_2, z_3)$ is a $m$-box Carleson measure for the Hardy space $H^2 (T_{\Lambda_3});$
\item[(2)]
$\mu$ is the Carleson measure for the weighted Bergman space $A^2_{4m+1} (\Pi^+).$
\end{itemize}
\end{theorem}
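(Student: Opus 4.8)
The plan is to derive Theorem \ref{delta} as an essentially immediate corollary of Proposition \ref{restrict}, which already carries all of the analytic content: up to a dimensional constant it identifies $F\mapsto R\Box^m F$ as a bounded map from $H^2(T_{\Lambda_3})$ into $A^2_{4m+1}(\Pi^+)$, and, conversely, shows that every element of $A^2_{4m+1}(\Pi^+)$ is obtained in this way from some $F\in H^2(T_{\Lambda_3})$ of comparable norm. The first step is a purely formal reformulation of condition (1). Since the measure $d\mu(z_1)\,d\delta_O(z_2,z_3)$ is carried by the slice $\{z_2=z_3=0\}$, for every holomorphic $F$ on $T_{\Lambda_3}$ one has
\[
\int_{T_{\Lambda_3}}\bigl|\Box^m F(z_1,z_2,z_3)\bigr|^2\,d\mu(z_1)\,d\delta_O(z_2,z_3)=\int_{\Pi^+}\bigl|(R\Box^m F)(z_1)\bigr|^2\,d\mu(z_1),
\]
so (1) is exactly the statement that there is $C>0$ with $\int_{\Pi^+}|R\Box^m F|^2\,d\mu\le C\|F\|^2_{H^2(T_{\Lambda_3})}$ for all $F\in H^2(T_{\Lambda_3})$.

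For the implication $(2)\Rightarrow(1)$, I would assume $\mu$ is a Carleson measure for $A^2_{4m+1}(\Pi^+)$, take an arbitrary $F\in H^2(T_{\Lambda_3})$, apply part (1) of Proposition \ref{restrict} to get $R\Box^m F\in A^2_{4m+1}(\Pi^+)$ with $\|R\Box^m F\|_{A^2_{4m+1}(\Pi^+)}^2\le C_1\|F\|^2_{H^2(T_{\Lambda_3})}$, and then feed $R\Box^m F$ into the Carleson inequality for $A^2_{4m+1}(\Pi^+)$ to obtain $\int_{\Pi^+}|R\Box^m F|^2\,d\mu\le C_2\|R\Box^m F\|^2_{A^2_{4m+1}(\Pi^+)}\le C_1C_2\|F\|^2_{H^2(T_{\Lambda_3})}$. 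By the reformulation above, this is (1).

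For the converse $(1)\Rightarrow(2)$, I would assume (1), start from an arbitrary $G\in A^2_{4m+1}(\Pi^+)$, invoke part (2) of Proposition \ref{restrict} to produce $F\in H^2(T_{\Lambda_3})$ with $R\Box^m F=G$ and $\|F\|_{H^2(T_{\Lambda_3})}=c_m\|G\|_{A^2_{4m+1}(\Pi^+)}$, and then compute, using the reformulation and hypothesis (1),
\[
\int_{\Pi^+}|G|^2\,d\mu=\int_{\Pi^+}|R\Box^m F|^2\,d\mu\le C\|F\|^2_{H^2(T_{\Lambda_3})}=Cc_m^2\|G\|^2_{A^2_{4m+1}(\Pi^+)},
\]
so $\mu$ is a Carleson measure for $A^2_{4m+1}(\Pi^+)$, which is (2).

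Since Proposition \ref{restrict} is already in hand, there is no genuine obstacle remaining in the theorem itself; the step that does the real work is the surjectivity half, part (2) of Proposition \ref{restrict}, where one must realize a prescribed Bergman datum $G$ on $\Pi^+$ as the restriction of $\Box^m F$ for a Hardy function $F$ on $T_{\Lambda_3}$ — this rests on the Paley--Wiener descriptions of Propositions \ref{Paley-Wiener} and \ref{PW} together with the explicit evaluation of $\int_{t_2^2+t_3^2<t_1^2}\Delta^{2m}(t)\,dt_2\,dt_3$ used there. The only things to watch in the present proof are bookkeeping of the multiplicative constants and the remark that $R\Box^m F$ and $G$ are honest holomorphic functions on $\Pi^+$, so that the pointwise identity $(R\Box^m F)(z_1)=(\Box^m F)(z_1,0,0)$ and all the integrals above are legitimate.
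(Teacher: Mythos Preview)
Your proposal is correct and follows essentially the same approach as the paper: both directions are derived directly from Proposition \ref{restrict}, using part (1) for $(2)\Rightarrow(1)$ and the surjectivity in part (2) for $(1)\Rightarrow(2)$, together with the trivial reformulation of the Dirac-mass integral as an integral of $R\Box^m F$ over $\Pi^+$.
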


\begin{proof}
$(2)\Rightarrow (1)$ Let $\mu$ be a  Carleson measure for the weighted Bergman space  $A^2_{4m+1} (\Pi^+)$ with Carleson constant $C.$  Then
for every $F\in H^3 (T_{\Lambda_3}),$ we have
\Beas
\int_{T_{\Lambda_3}} |\Box^m F(z)|^2d\mu (z_1)d\delta_O (z_2, z_3) &=& \int_{\Pi^+} |(\Box^m F)(z_1,0,0)|^2 d\mu(z_1)\\
&=& \int_{\Pi^+} |(R\Box^m F)(z_1)|^2 d\mu(z_1)\\ &\leq& C||R\Box^m F||_{A^2_{4m+2} (\Pi^+)}^2\\
&\leq& \frac { C\Gamma (4m+2)}{2^{4m+4}\pi (2m+1)}||F||_{H^2 (T_{\Lambda_3})}^2,
\Eeas

where the latter inequality follows from assertion (1) of Proposition \ref{restrict}. \\ 
\indent
$(1)\Rightarrow (2)$ We suppose that $\mu (z_1)\delta_O (z_2, z_3)$ is a $m$-box Carleson measure for the Hardy space $H^2 (T_{\Lambda_3})$ with Carleson constant $C.$ It follows from assertion (2) of Proposition \ref{restrict} that for every function $G\in A^2_{4m+1} (\Pi^+)$, there is an  $F\in H^2 (T_{\Lambda_3})$  such that $\Box^mF$ is an extension of $G$ to $T_{\Lambda_3}$ and  $||F||_{H^2 (T_{\Lambda_3})}=2\frac {m+1}{\pi \sqrt{2\Gamma (4m+2)}} ||G||_{A^2_{4m+1} (\Pi^+)}.$ Thus
$$ 
\begin{array}{clcr}
\int_{\Pi^+} |G(z_1)|^2d\mu(z_1)&=\int_{T_{\Lambda_3}} |\Box^mF(z_1, z_2, z_3)|^2d\mu (z_1)d\delta_O (z_2, z_3)\\
&\leq C||F||_{H^2 \left (T_{\Lambda_3}\right )}^2=\frac {C(m+1)^2}{2\pi^2\Gamma (4m+2)}||G||^2_{A^2_{4m+1} (\Pi^+)}.
\end{array}
$$
The proof is complete.
\end{proof}

We recall that a characterization of Carleson measures for standard Bergman spaces $A^2_\alpha$ of the unit disc was provided by D. Stegenga \cite{St} in terms of a geometrical condition on Carleson sectors. For the unweighted case, refer to \cite{H}. A characterization of Carleson measures for standard Bergman spaces  $A^2_\alpha (\Pi^+)$ in terms of a geometrical condition on Carleson rectangles can be found in \cite{CarBen} (see also Section 5 below).
\vskip 2truemm
\noindent
{\textbf {2. Second class of examples:  measures of the form $\mu(z_1, z_2)\delta_0 (z_3).$}}
We denote by $\delta_0 (z_3)$ the Dirac measure at the origin in $\mathbb C.$ We characterize the positive measures $\mu(z_1, z_2)$ on the domain $D:=\{(x_1+iy_1, x_2+iy_2)\in \mathbb C^2: y^2_1 >y_2^2, \hskip 1truemm y_1>0 \}$ of $\mathbb C^2$ for which there exists a positive constant  $C$ such that the following estimate holds:
$$
\begin{array}{clcr}
\int_{T_{\Lambda_3}} |\Box^mF(z_1,z_2,z_3)|^2d\mu (z_1, z_2)\delta_0 (z_3)\\
= \int_{D} |(\Box^mF)(z_1, z_2, 0)|^2d\mu(z_1, z_2)
 \leq C||F||^2_{H^2 (T_{\Lambda_3})}
\end{array}
$$
for every $F\in H^2 (T_{\Lambda_3}).$ It is easily checked that the  Lorentz cone ${\Lambda_3}$ is linearly equivalent to the spherical cone $\Sigma$ in $\mathbb R^3$ defined by
$$\Sigma :=\{(y_1, y_2, y_3)\in \mathbb R^3: y_1y_2 -y_3^2 >0, \hskip 1truemm y_1 >0\}$$
Our problem takes the following form. Characterize the positive measures  $\mu (z_1, z_2)$ on the  product $\Pi^+\times \Pi^+$ of two upper half-planes (the tube domain over the first octant)
for which there exists a positive constant   $C$ such that the following estimate holds:
$$
\begin{array}{clcr}
\int_{T_\Sigma} |\Box^mF(z_1,z_2,z_3)|^2d\mu (z_1, z_2)\delta_0 (z_3)\\
= \int_{\Pi^+\times \Pi^+} |(\Box^mF)(z_1, z_2, 0)|^2d\mu(z_1, z_2)
\leq C||F||^2_{H^2 (T_S)}
\end{array}
$$
for every  $F\in H^2 (T_\Sigma).$ We note that in this case, the determinant function is now $\Delta(t)=t_1t_2-t_3^2$.

\begin{definition}
\begin{itemize}
\item[(1)]
Let $F$ be a complex-valued function  defined on   $T_\Sigma.$ We call restriction of F to $\Pi^+\times \Pi^+$ the function $RF: \Pi^+ \times \Pi^+\rightarrow \mathbb C$ defined by 
$$(RF)(z_1, \hskip 1truemm z_2)=F(z_1,z_2,0).$$
\item[(2)]
Let $G$ be a complex-valued function  defined on $\Pi^+\times \Pi^+.$ We say that a function $F:T_\Sigma \rightarrow \mathbb C$ is an extension of $G$ if $RF=G.$
\end{itemize}
\end{definition}

Our result is the following. 

\begin{theorem}
Let $\mu$ be a positive measure on $\Pi^+\times \Pi^+$, and let $m\ge 0$ be an integer. Then the following two assertions are equivalent.
\begin{itemize}
\item[(1)]
$\mu (z_1, z_2)\delta_0 (z_3)$ is a $m$-box Carleson measure for the Hardy space $H^2 (T_\Sigma)$
\item[(2)]
$\mu$ is a Carleson measure for the weighted Bergman space\\ $A^2_{2m-\frac 12} (\Pi^+\times \Pi^+).$
\end{itemize}
\end{theorem}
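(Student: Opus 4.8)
The plan is to reproduce, in the product setting, the mechanism behind Theorem~\ref{delta}: first establish a restriction/extension proposition — the exact analogue of Proposition~\ref{restrict} — identifying $\Box^m H^2(T_\Sigma)$ with the weighted Bergman space $A^2_{2m-\frac12}(\Pi^+\times\Pi^+)$ of the tube over the first octant, and then read off the Carleson equivalence verbatim.

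First I would use the Paley--Wiener theorem (Proposition~\ref{Paley-Wiener}) to write any $F\in H^2(T_\Sigma)$ as the Laplace transform of an $f\in L^2(\Sigma)$, with $\|F\|_{H^2(T_\Sigma)}^2=(2\pi)^3\int_\Sigma|f(t)|^2\,dt$. Since $\langle z,t\rangle$ reduces to $z_1t_1+z_2t_2$ when $z_3=0$, and $\Sigma$ fibers over the first quadrant with fibers $\{|t_3|<\sqrt{t_1t_2}\}$, Fubini gives
$$R\Box^mF(z_1,z_2)=\int_{(0,\infty)^2}h(t_1,t_2)\,e^{i(z_1t_1+z_2t_2)}\,dt_1dt_2,\qquad h(t_1,t_2)=\int_{|t_3|<\sqrt{t_1t_2}}f(t)\,\Delta^m(t)\,dt_3 .$$
The single elementary identity driving everything is, for $k\ge0$,
$$\int_{|t_3|<\sqrt{t_1t_2}}(t_1t_2-t_3^2)^k\,dt_3=(t_1t_2)^{k+\frac12}\int_{-1}^{1}(1-s^2)^k\,ds ,$$
via the substitution $t_3=\sqrt{t_1t_2}\,s$. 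Applying the Schwarz inequality in $t_3$ to $h$ together with this identity for $k=2m$ gives $|h(t_1,t_2)|^2\le C_m(t_1t_2)^{2m+\frac12}\int_{|t_3|<\sqrt{t_1t_2}}|f(t)|^2dt_3$; dividing by $(t_1t_2)^{2m+\frac12}$ and integrating over the first quadrant recovers a multiple of $\int_\Sigma|f|^2dt$. By the Paley--Wiener theorem for $A^2_{\vec\alpha}(\Pi^+\times\Pi^+)$ (Proposition~\ref{PWProd}) with $\vec\alpha=(2m-\frac12,2m-\frac12)$ — legitimate since $2m-\frac12>-1$ for every integer $m\ge0$ — this is precisely $\|R\Box^mF\|_{A^2_{2m-\frac12}(\Pi^+\times\Pi^+)}\le C_m'\|F\|_{H^2(T_\Sigma)}$, the counterpart of part~(1) of Proposition~\ref{restrict}.

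For the converse I would take $G\in A^2_{2m-\frac12}(\Pi^+\times\Pi^+)$, represent it by Proposition~\ref{PWProd} as the Laplace transform of $g$ on the first octant, put $d_m=\int_{-1}^{1}(1-s^2)^m\,ds$, and define
$$F(z):=\frac1{d_m}\int_\Sigma\frac{g(t_1,t_2)}{(t_1t_2)^{m+\frac12}}\,e^{i\langle z,t\rangle}\,dt .$$
Since the integrand is independent of $t_3$, integrating out $t_3$ and using the displayed identity with $k=m$ shows $R\Box^mF=G$; a further use of the same fibration with $k=0$ shows $F\in H^2(T_\Sigma)$, with $\|F\|_{H^2(T_\Sigma)}=c_m\|G\|_{A^2_{2m-\frac12}(\Pi^+\times\Pi^+)}$; this is the counterpart of part~(2). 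Granting the resulting proposition, the theorem follows exactly as Theorem~\ref{delta} does: for $(2)\Rightarrow(1)$, $\int_{T_\Sigma}|\Box^mF|^2d\mu(z_1,z_2)d\delta_0(z_3)=\int_{\Pi^+\times\Pi^+}|R\Box^mF|^2d\mu\le C\|R\Box^mF\|^2_{A^2_{2m-\frac12}}\le C(C_m')^2\|F\|^2_{H^2(T_\Sigma)}$; for $(1)\Rightarrow(2)$, each $G$ is realized as $R\Box^mF$ for the controlled extension $F$ above, so $\int_{\Pi^+\times\Pi^+}|G|^2d\mu\le C\|F\|^2_{H^2(T_\Sigma)}=C c_m^2\|G\|^2_{A^2_{2m-\frac12}}$.

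The only genuinely delicate point — what I would flag as the main obstacle — is the matching of weight parameters: the one-dimensional fiber $\{|t_3|<\sqrt{t_1t_2}\}$ produces the half-integer power $(t_1t_2)^{k+\frac12}$, which is what forces the Bergman exponent to be $2m-\frac12$ rather than an integer; one must also confirm this exponent lies in the admissible range for Proposition~\ref{PWProd} (strictly above $-1$ in each slot), which holds for all $m\ge0$. Beyond that, everything reduces to bookkeeping of the Beta-function constants $\int_{-1}^1(1-s^2)^k\,ds=B(\frac12,k+1)$.
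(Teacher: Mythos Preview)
Your proposal is correct and follows essentially the same route as the paper: the paper states that the theorem is an easy adaptation of Theorem~\ref{delta} once one has Proposition~\ref{restrictprod}, and your argument is precisely a proof of that proposition (Paley--Wiener on both sides, the fiber identity $\int_{|t_3|<\sqrt{t_1t_2}}(t_1t_2-t_3^2)^k\,dt_3=c_k(t_1t_2)^{k+\frac12}$, Schwarz in $t_3$, and the explicit extension $F$ built from $g(t_1,t_2)/(t_1t_2)^{m+\frac12}$) followed by the same two-line Carleson deduction. The only cosmetic discrepancy is the normalization of the inner product on $T_\Sigma$ --- the paper takes $\langle z,t\rangle=\tfrac12(z_1t_1+z_2t_2)+z_3t_3$ rather than $z_1t_1+z_2t_2+z_3t_3$ --- which affects only the explicit constants, not the argument.
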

The proof is an easy adaptation of the proof of Theorem \ref{delta} with the help of the following result.

\begin{proposition}\label{restrictprod}
\begin{itemize}
\item[(1)]
The following estimate holds:  
$$||R\Box^mF||_{A^2_{2m-\frac 12} (\Pi^+\times \Pi^+)}^2\leq \frac {4\pi^{\frac 52}(2m)!\Gamma \left (2m+\frac 12\right )}{2m+\frac 12}||F||^2_{H^2 (T_\Sigma)}$$
for every $F\in  H^2 (T_\Sigma)$.
\item[(2)]
Conversely, for every function $G\in A^2_{2m-\frac 12} (\Pi^+\times \Pi^+)$, there exists a function $F\in H^2 (T_\Sigma)$ such that $\Box^mF$ is an extension of $G$. Moreover,
$$||F||_{H^2 (T_\Sigma)}^2=\frac {\left (\Gamma \left (m+\frac 32\right )\right )^2}{4\pi^2\left (\Gamma \left (2m+\frac 12\right )m!\right )^2}||G||_{A^2_{2m-\frac 12} (\Pi^+\times \Pi^+)}^2.$$ 
\end{itemize}
\end{proposition}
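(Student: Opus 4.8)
The plan is to run the argument of Proposition~\ref{restrict} with the disc $\Pi^+$ replaced by the product $\Pi^+\times\Pi^+$ and the one–variable Paley–Wiener theorem (Proposition~\ref{PW}) replaced by its two–variable analogue (Proposition~\ref{PWProd}). The one point that genuinely requires thought is the choice of coordinates: one works throughout with the spherical realization $\Sigma=\{(t_1,t_2,t_3):t_1t_2-t_3^2>0,\ t_1>0\}$ (linearly equivalent to $\Lambda_3$, as already noted), for which $\Delta(t)=t_1t_2-t_3^2$ and, crucially, freezing $z_3=0$ corresponds to integrating $t_3$ over the fiber $|t_3|<\sqrt{t_1t_2}$ while the surviving pair $(t_1,t_2)$ fills out the first octant $(0,\infty)\times(0,\infty)$ — precisely the tube over which Proposition~\ref{PWProd} is stated. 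Without this passage to $\Sigma$ the frozen base cone is not a product and Proposition~\ref{PWProd} cannot be invoked.

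For part~(1): given $F\in H^2(T_\Sigma)$, Proposition~\ref{Paley-Wiener} gives $F(z)=\int_\Sigma f(t)e^{i\langle z,t\rangle}\,dt$ with $\|F\|_{H^2(T_\Sigma)}^2=(2\pi)^3\int_\Sigma|f|^2$; applying $\Box^m$ multiplies the symbol by $\Delta^m$, and putting $z_3=0$ and using Fubini yields
\[
(R\Box^m F)(z_1,z_2)=\int_{(0,\infty)^2}h(t_1,t_2)\,e^{i(z_1t_1+z_2t_2)}\,dt_1dt_2,\qquad h(t_1,t_2):=\int_{|t_3|<\sqrt{t_1t_2}}\Delta^m(t)f(t)\,dt_3 .
\]
By Proposition~\ref{PWProd} with $\alpha_1=\alpha_2=2m-\tfrac12$ (which is $>-1$ for $m\ge0$), the norm $\|R\Box^m F\|_{A^2_{2m-\frac12}(\Pi^+\times\Pi^+)}^2$ equals $c_{\vec\alpha}^2\int_{(0,\infty)^2}|h(t_1,t_2)|^2(t_1t_2)^{-(2m+1/2)}\,dt_1dt_2$. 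Cauchy--Schwarz in $t_3$, together with the substitution $t_3=\sqrt{t_1t_2}\,s$ and the Beta integral
\[
\int_{|t_3|<\sqrt{t_1t_2}}\Delta^{2m}(t)\,dt_3=(t_1t_2)^{2m+\frac12}\,B\!\left(\tfrac12,2m+1\right)=(t_1t_2)^{2m+\frac12}\,\frac{\sqrt{\pi}\,(2m)!}{\Gamma(2m+\frac32)},
\]
bounds $|h(t_1,t_2)|^2$ by this quantity times $\int_{|t_3|<\sqrt{t_1t_2}}|f|^2\,dt_3$; the factor $(t_1t_2)^{2m+1/2}$ then exactly cancels the weight, and integrating out leaves $\int_\Sigma|f|^2$, i.e. a fixed multiple of $\|F\|_{H^2(T_\Sigma)}^2$. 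Collecting $c_{\vec\alpha}^2$ and the Paley--Wiener normalizations gives the stated constant.

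For part~(2): given $G\in A^2_{2m-\frac12}(\Pi^+\times\Pi^+)$, write $G(z_1,z_2)=\int_{(0,\infty)^2}g(t)e^{i(z_1t_1+z_2t_2)}\,dt$ by Proposition~\ref{PWProd}, and define $F$ to be the Laplace transform over $\Sigma$ of $\tilde f(t):=c_m(t_1t_2)^{-(m+1/2)}g(t_1,t_2)$, where $c_m=\Gamma(m+\tfrac32)/(\sqrt{\pi}\,m!)$ is chosen, via $\int_{|t_3|<\sqrt{t_1t_2}}\Delta^m(t)\,dt_3=(t_1t_2)^{m+1/2}B(\tfrac12,m+1)$, so that $\int_{|t_3|<\sqrt{t_1t_2}}\Delta^m(t)\tilde f(t)\,dt_3=g(t_1,t_2)$; then $R\Box^m F=G$ by Fubini, exactly as in the first step. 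Finally $\|F\|_{H^2(T_\Sigma)}^2=(2\pi)^3\int_\Sigma|\tilde f|^2$, and since $\int_\Sigma|\tilde f|^2=c_m^2\int_{(0,\infty)^2}(t_1t_2)^{-(2m+1)}|g|^2\big(\int_{|t_3|<\sqrt{t_1t_2}}dt_3\big)\,dt_1dt_2=2c_m^2\int_{(0,\infty)^2}(t_1t_2)^{-(2m+1/2)}|g|^2$, comparison with $\|G\|^2=c_{\vec\alpha}^2\int_{(0,\infty)^2}(t_1t_2)^{-(2m+1/2)}|g|^2$ yields the claimed identity. Membership of $F$ in $H^2(T_\Sigma)$ and of $G$ in $A^2_{2m-\frac12}$ is automatic from the respective Paley--Wiener theorems, and every interchange of integration is legitimate because the integrands are nonnegative (or absolutely integrable by the very estimates being proved). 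The computation is essentially routine; the substantive ingredients are the coordinate choice and the matching of homogeneity degrees — via the Beta factor $B(\tfrac12,k+1)=\sqrt{\pi}\,k!/\Gamma(k+\tfrac32)$, which is what pins the weight index at exactly $2m-\tfrac12$ — and I expect the only real nuisance to be the bookkeeping of the numerical constants.
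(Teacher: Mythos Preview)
Your proposal is correct and follows essentially the same route as the paper: pass to the spherical realization $\Sigma$, use the Paley--Wiener representations (Propositions~\ref{Paley-Wiener} and~\ref{PWProd}) on both sides, apply Cauchy--Schwarz in the fiber variable $t_3$, and evaluate $\int_{t_3^2<t_1t_2}\Delta^k(t)\,dt_3$ via a Beta integral to see the weight $(t_1t_2)^{k+1/2}$ appear. The only cosmetic difference is that the paper carries out the Plancherel/Fubini computation for $\|R\Box^m F\|_{A^2_{2m-1/2}}$ by hand rather than quoting Proposition~\ref{PWProd}, and uses the inner product $\langle z,t\rangle=\tfrac12(z_1t_1+z_2t_2)+z_3t_3$ on $T_\Sigma$, which shifts some factors of $2$ relative to your formulas; as you anticipated, this only affects the bookkeeping of the numerical constants.
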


\begin{proof}[Proof of Proposition \ref {restrictprod}]
(1): We recall that for $z=(z_1,z_2,z_3)\in T_\Sigma$ and $t=(t_1,t_2,t_3)\in \Sigma$, we have $\langle z, t\rangle:=\frac 12 (z_1t_1+z_2t_2)+z_3t_3.$ Let $F\in  H^2 (T_\Sigma).$ We recall with Proposition \ref{Paley-Wiener} that there exists a measurable function $f:\Sigma \rightarrow \mathbb C$ such that
$$F(z) = \int_{\Sigma} f(t)e^{i\langle z, t\rangle}dt \quad \quad (z\in T_{\Sigma}),$$
with $||F||_{H^2 (T_\Sigma)}^2=8\pi^3\int_{\Sigma} |f(t)|^2dt.$ Then 
\Beas
(R\Box^mF)(z_1,z_2) =\int_{\Sigma} f(t_1, t_2, t_3)e^{\frac i2 (t_1z_1+t_2z_2)}\Delta^m(t)dt_1dt_2dt_3\\
= \int_{(0,\infty)\times (0, \infty)} \left (\int_{t_3^2 < t_1t_2} f(t_1, t_2, t_3)\Delta^m(t)dt_3\right )e^{\frac i2 (t_1z_1+t_2z_2)}dt_1dt_2.
\Eeas
Using Plancherel formula, we obtain
\Beas
 \int_{\mathbb{R}^2} \left \vert (R\Box^mF)(x_1+iy_1,x_2+iy_2)\right \vert^2dx_1dx_2\\ 
= 4\pi^2\int_{(0, \infty) 	\times (0, \infty)} \left |\int_{t_3^2 < t_1t_2} f(t_1, t_2, t_3)\Delta^m(t)dt_3\right |^2e^{-(t_1y_1+t_2y_2)}dt_1dt_2.
\Eeas
Next, applying the Fubini Theorem, we obtain:
$$
\begin{array}{clcr}
||R\Box^mF||_{A^2_{2m-\frac 12}}^2\\
= \int_{(0,\infty)\times (0, \infty)} \left(\int_{\mathbb{R}^2} \left |(R\Box^mF)(x_1+iy_1,x_2+iy_2)\right |^2dx_1dx_2\right)\times\\ 
y_1^{2m-\frac 12}y_2^{2m-\frac 12}dy_1dy_2 \\
= 4\pi^2\int_{(0,\infty)\times (0, \infty)} \left |\int_{t_3^2 < t_1t_2} f(t_1, t_2, t_3)\Delta^m(t)dt_3\right |^2 \times\\
  \left(\int_{(0,\infty)\times (0, \infty)}e^{-(t_1y_1+t_2y_2)} y_1^{2m-\frac 12}y_2^{2m-\frac 12}dy_1dy_2\right)dt_1dt_2\\
= \left (2\pi\Gamma (2m+\frac 12)  \right )^2\int_ {(0,\infty)\times (0, \infty)}\left |\int_{t_3^2 < t_1t_2} f(t_1, t_2, t_3)\Delta^m(t)dt_3\right |^2\frac {dt_1}{t_1^{2m+\frac 12}}\frac {dt_2}{t_2^{2m+\frac 12}}.
\end{array}
$$

Apply the Schwarz inequality to the integral with respect to $dt_3;$ we obtain
$$
\begin{array}{clcr} \label{beta}
\left |\int_{t_3^2 < t_1t_2} f(t_1, t_2, t_3)\Delta^m(t)dt_3)\right |^2\\
\leq \left(\int_{t_3^2 < t_1t_2} |f(t_1, t_2, t_3)|^2dt_3\right)\left(\int_{t_3^2 < t_1t_2}\Delta^{2m}(t)dt_3\right)\\
= \beta \left (2m+1, \frac 12\right )  t_1^{2m+\frac 12} t_2^{2m+\frac 12}\int_{t_3^2 < t_1t_2} |f(t_1, t_2, t_3)|^2dt_3.
\end{array}
$$
In the last inequality, we have used that for $k>0$ an integer, and $a>0$,
\Be\label{eq:identity}\int_0^{\sqrt a}(a-x^2)^kdx=c_k a^{k+\frac 12}.\Ee 
More precisely, we have $c_k=\frac{1}{2}\beta \left (k+1, \frac 12\right ),$ where $\beta$ denotes the usual beta function.
We conclude that
$$
\begin{array}{clcr}
||R\Box^mF||_{A^2_{2m-\frac 12} (\Pi^+\times \Pi^+)}^2\\
\leq \frac {4\pi^{\frac 52}(2m)!\Gamma \left (2m+\frac 12\right )}{2m+\frac 12} \int_{(0, \infty) 	\times (0, \infty)} \left(\int_{t_3^2 < t_1t_2} |f(t_1, t_2, t_3)|^2dt_3\right)dt_2dt_1\\
=  \frac {4\pi^{\frac 52}(2m)!\Gamma \left (2m+\frac 12\right )}{2m+\frac 12} \int_\Sigma |f(t)|^2dt=\frac {(2m)!\Gamma (2m+\frac 12}{2\sqrt \pi \left (2m+\frac 12\right )}  |F||^2_{H^2 (T_{\Sigma})}.
\end{array}
$$

(2): Now $G\in A^2_{2m-\frac 12} (\Pi^+\times \Pi^+).$ We know from Proposition \ref{PWProd} that there exists a measurable function $g:(0, \infty) 	\times (0, \infty) \rightarrow \mathbb C$ such  that
$$G(z) =\int_{(0,\infty)\times (0, \infty)} g(t_1,t_2)e^{i(t_1z_1+t_2z_2)}dt_1dt_2 \quad \quad (z=(z_1,z_2)\in \Pi^+\times \Pi^+),$$
with 
$$
\begin{array}{clcr}
||G||_{A^2_{2m-\frac 12} (\Pi^+\times \Pi^+)}^2\\
=4\pi^2 \left (\Gamma \left (2m+\frac 12\right )  \right )^2\int_{(0,\infty)\times (0, \infty)} |g(t_1,t_2)|^2\frac {dt_1}{t_1^{2m+\frac 12}}\frac {dt_1}{t_1^{2m+\frac 12}}.
\end{array}$$
Let 
$$F(z):=\frac 1{2c_m}  \int_\Sigma \frac {g(t_1,t_2)}{t_1^{m+\frac 12}t_2^{m+\frac 12}} e^{i\langle z,t\rangle}dt_1dt_2dt_3,$$
where $c_m$ is the constant of (\ref{eq:identity}).
Using again (\ref{eq:identity}), one easily obtains that $$R\Box^mF(z_1,z_2)=G(z_1,z_2)$$. That is, the function $\Box^mF$ is an extension of $G$ to $T_{\Sigma}.$
\vskip .1cm
Now to conclude, in view of Proposition \ref{Paley-Wiener}, it is enough to prove that
$$\int_\Sigma \frac {|g(t_1,t_2)|^2}{t_1^{2m+1}t_2^{2m+1}} dt_1dt_2dt_3 <\infty.$$
This is obvious since the left hand side of the latter is equal to 
$$
\begin{array}{clcr}
 2\int_{(0,\infty)\times (0, \infty)} |g(t_1,t_2)|^2 \frac {dt_1}{t_1^{2m+\frac 12}}\frac {dt_2}{t_2^{2m+\frac 12}}\\
= \frac {1}{2\pi^2\left (\Gamma \left (2m+\frac 12\right )\right )^2}||G||^2_{A^2_{2m-\frac 12} (\Pi^+\times \Pi^+)}<\infty
\end{array}
$$
{\bf {by Proposition \ref{PWProd}}}. The proof is complete.
\end{proof}
For completeness, we give a characterization of Carleson measures for weighted Bergman space of $\Pi^+\times \Pi^+$ in the next section. 
For a characterization of Carleson measures on the bi-disc, the reader can consult \cite{H} for  unweighted Bergman spaces and \cite{J} for standard weighted Bergman spaces.

\section{Carleson measures for Bergman spaces of the tube over the first octant}
As in the one-parameter case \cite{CarBen}, we apply techniques of real harmonic analysis as developed for example in \cite{CD,GR,SE}. Let denote by $\mathcal{I}$ the set of all intervals of $\mathbb{R}$, and by $\mathcal{R}$ the set of all rectangles in $\mathbb{R}^2$. For $R\in \mathcal{R}$, $R=I_1\times I_2$ where the $I_j\in \mathcal{I}$, $j=1,2$.
\vskip .2cm
For $I$ an interval, we recall that the Carleson square associated to $I$ and denoted $Q_I$ is the set defined by $$Q_I:=\{z=x+iy\in \Pi^+:x\in I,\,\,\,\textrm{and}\,\,\,0<y<|I|\}.$$
The upper half of the Carleson square $Q_I$ is the set 
$$T_I:=\{z=x+iy\in \Pi^+:x\in I,\,\,\,\textrm{and}\,\,\,\frac{|I|}{2}<y<|I|\}.$$
Given $\alpha>-1$, we define the maximal function $M_\alpha$ on $\Pi^+$ by
$$M_\alpha f(z):=\sup_{I\in \mathcal{I}}\frac{\chi_{Q_I}(z)}{V_\alpha(Q_I)}\int_{Q_I}|f(z)|dV_\alpha(z)$$
where for simplicity, we have used the notation $dV_\alpha(x+iy)=y^\alpha dxdy$.
\vskip .2cm
It is not hard to prove that for any $p\in (1,\infty]$ (see for example \cite{CarBen}),
$$\|M_\alpha f\|_{p,\alpha}\le c_{p,\alpha}\|f\|_{p,\alpha}.$$
For $R\in \mathcal{R}$, we define the Carleson box (rectangle) $Q_R$ by $$Q_R=Q_{I_1}\times Q_{I_2},\,\,\,\textrm{whenever}\,\,\,R=I_1\times I_2.$$

On the product $\Pi^+\times \Pi^+$ of two upper-planes, we define the product measure $\mathcal V_{\vec{\alpha}}, \hskip 2truemm \vec{\alpha} = (\alpha_1, \alpha_2)$ by $d\mathcal V_{\vec{\alpha}} (z_1, z_2) =dV_{\alpha_1}(z_1)dV_{\alpha_2}(z_2).$ The weighted Lebesgue space $L^p_{\vec{\alpha}}(\Pi^+\times \Pi^+)$ is the space of measurable functions $f$ on $\Pi^+\times \Pi^+$ such that
$$||f||_{p, \vec{\alpha}}:=\left (\int_{\Pi^+\times \Pi^+} |f|^p d\mathcal V_{\vec{\alpha}}\right )^{\frac 1p}<\infty.$$
The weighted Bergman space $A^p_{\vec{\alpha}}(\Pi^+\times \Pi^+)$ is the subspace of $L^p_{\vec{\alpha}}(\Pi^+\times \Pi^+)$ consisting of holomorphic functions.

\begin{definition}
Given $\vec{\alpha}=(\alpha_1,\alpha_2)$, $\alpha_1,\alpha_2>-1$, and $1<p\leq q<\infty$, we say a positive measure $\mu$ defined on $\Pi^+\times \Pi^+$ is a $(\frac qp,\vec{\alpha})$-Carleson measure, if there is a constant $C>0$ such that for any $R\in \mathcal{R}$,
\Be\label{eq:carlmeaprod}
\mu(Q_R)\le C\left(\mathcal V_{\vec{\alpha}}(Q_R)\right)^{\frac qp}.
\Ee
Here $\mathcal V_{\vec{\alpha}}(Q_R)=V_{\alpha_1}(Q_{I_1})V_{\alpha_2}(Q_{I_2})$ whenever $R=I_1\times I_2$.
\end{definition}
We have the following characterization of Carleson measures for the weighted Bergman spaces in the product of two upper-half planes.
\begin{theorem}\label{thm:carlbergprod}
Let $\vec{\alpha}=(\alpha_1,\alpha_2)$, with $\alpha_1,\alpha_2>-1$, and let $1<p\leq q<\infty$. Assume $\mu$ is a positive measure on $\Pi^+\times  \Pi^+$. Then the following assertions are equivalent.
\begin{itemize}
\item[(a)] $\mu$ is $(\frac qp,\vec{\alpha})$-Carleson measure.
\item[(b)] There exists a constant $C>0$ such that for any $f\in A_{\vec{\alpha}}^p(\Pi^+	\times  \Pi^+)$,
\Be\label{eq:carlembedprod}
\int_{\Pi^+	\times  \Pi^+}|f(z)|^qd\mu(z)\le C\left(\int_{\Pi^+	\times  \Pi^+}|f(z)|^pd\mathcal V_{\vec{\alpha}}(z)\right)^{\frac qp}
\Ee
where $d\mathcal V_{\vec{\alpha}}(z_1,z_2)=dV_{\alpha_1}(z_1)dV_{\alpha_2}(z_2)$.
\end{itemize}
\end{theorem}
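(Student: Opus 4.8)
The plan is to reduce the product-domain statement to the one-parameter theory of Carleson measures for weighted Bergman spaces of $\Pi^+$, handling the two variables essentially independently. The direction $(b)\Rightarrow(a)$ is the easy one: given a Carleson rectangle $Q_R=Q_{I_1}\times Q_{I_2}$, one tests \eqref{eq:carlembedprod} against a product test function $f(z_1,z_2)=f_{I_1}(z_1)f_{I_2}(z_2)$, where each $f_{I_j}$ is the standard one-variable Bergman test function adapted to the interval $I_j$ (roughly a suitable power of $(z_j-c_{I_j})^{-1}$ normalized so that $|f_{I_j}|\gtrsim \mathcal V_{\alpha_j}(Q_{I_j})^{-1/p}$ on $Q_{I_j}$ and $\|f_{I_j}\|_{p,\alpha_j}^p\lesssim 1$). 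Since such a product $f$ lies in $A^p_{\vec\alpha}(\Pi^+\times\Pi^+)$ with $\|f\|_{p,\vec\alpha}^p=\|f_{I_1}\|_{p,\alpha_1}^p\|f_{I_2}\|_{p,\alpha_2}^p$, restricting the left side of \eqref{eq:carlembedprod} to $Q_R$ and bounding $|f|$ from below there yields $\mu(Q_R)\lesssim \big(\mathcal V_{\vec\alpha}(Q_R)\big)^{q/p}$, which is \eqref{eq:carlmeaprod}.

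For $(a)\Rightarrow(b)$, the approach is the maximal-function/averaging argument from the one-parameter case in \cite{CarBen}, run in the product setting with the \emph{strong} (product) maximal operator $\mathcal M_{\vec\alpha}:=M_{\alpha_1}\otimes M_{\alpha_2}$ acting in the two variables separately. First I would establish a pointwise domination: for $f$ holomorphic on $\Pi^+\times\Pi^+$, the sub-mean-value property applied in each variable over the top halves $T_{I_1}\times T_{I_2}$ of product Carleson boxes gives $|f(z_1,z_2)|^p\lesssim \mathcal M_{\vec\alpha}(|f|^p)(z_1,z_2)$ for all $(z_1,z_2)$, with constant depending only on $\vec\alpha,p$. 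Next, a Whitney-type decomposition of $\Pi^+\times\Pi^+$ into dyadic product tents $T_R$ together with the Carleson condition \eqref{eq:carlmeaprod} gives, for any nonnegative measurable $h$ on $\Pi^+\times\Pi^+$,
$$
\int_{\Pi^+\times\Pi^+} \big(\mathcal M_{\vec\alpha}h\big)^{q/p}\, d\mu \;\le\; C\int_{\Pi^+\times\Pi^+} \big(\mathcal M_{\vec\alpha}h\big)^{q/p}\, d\mathcal V_{\vec\alpha},
$$
i.e. $\mu$ is controlled by $\mathcal V_{\vec\alpha}$ \emph{against functions of the form $\mathcal M_{\vec\alpha}h$}; this is where \eqref{eq:carlmeaprod} is used, exactly as in the single-variable proof but now summing over rectangles. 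Finally, applying this with $h=|f|^p$, using the pointwise bound $|f|^p\lesssim \mathcal M_{\vec\alpha}(|f|^p)$ on the left and the boundedness of the product maximal operator $\|\mathcal M_{\vec\alpha}\|_{L^{r}_{\vec\alpha}\to L^{r}_{\vec\alpha}}<\infty$ for $r=q/p>1$ (iterate the one-variable bound $\|M_{\alpha_j}\cdot\|_{r,\alpha_j}\le c\|\cdot\|_{r,\alpha_j}$ quoted before Theorem \ref{thm:carlbergprod}) on the right, one obtains
$$
\int_{\Pi^+\times\Pi^+}|f|^q\,d\mu \;\lesssim\; \int_{\Pi^+\times\Pi^+}\big(\mathcal M_{\vec\alpha}(|f|^p)\big)^{q/p}\,d\mathcal V_{\vec\alpha} \;\lesssim\; \Big(\int_{\Pi^+\times\Pi^+}|f|^p\,d\mathcal V_{\vec\alpha}\Big)^{q/p},
$$
which is \eqref{eq:carlembedprod}.

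I expect the main obstacle to be the passage \eqref{eq:carlmeaprod}$\,\Rightarrow\,$the maximal-function inequality: in the product setting one cannot use a simple stopping-time / Vitali covering argument on arbitrary rectangles (the strong maximal function does not satisfy a weak $(1,1)$ bound and the relevant covering lemma fails), so the argument must be organized via a dyadic/Whitney decomposition of the tube and a careful level-set analysis, exploiting that $r=q/p>1$ strictly (this is why the hypothesis $p\le q$, and in fact the boundedness of $\mathcal M_{\vec\alpha}$ on $L^r$, is essential). The rest — the test-function computation in $(b)\Rightarrow(a)$, the sub-mean-value estimate, and the bookkeeping of the weights $dV_{\alpha_j}(Q_{I_j})\sim |I_j|^{\alpha_j+2}$ — is routine and parallels \cite{CarBen} one coordinate at a time.
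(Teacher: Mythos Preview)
Your treatment of (b)$\Rightarrow$(a) is correct and is exactly what the paper does: test \eqref{eq:carlembedprod} on a product of one-variable Bergman-kernel powers centered at the Carleson box, and read off \eqref{eq:carlmeaprod}.

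For (a)$\Rightarrow$(b) your overall plan --- pointwise domination of holomorphic functions by the strong maximal operator, followed by a maximal-function embedding driven by the Carleson condition --- is also the paper's. But the intermediate inequality you display,
$$\int_{\Pi^+\times\Pi^+}\big(\mathcal M_{\vec\alpha}h\big)^{q/p}\,d\mu \;\le\; C\int_{\Pi^+\times\Pi^+}\big(\mathcal M_{\vec\alpha}h\big)^{q/p}\,d\mathcal V_{\vec\alpha},$$
does \emph{not} follow from \eqref{eq:carlmeaprod} when $q>p$. Indeed, a point mass $\delta_{z_0}$ with $z_0$ in the interior satisfies \eqref{eq:carlmeaprod}, yet taking $h=\chi_{B(z_0,\varepsilon)}$ and letting $\varepsilon\to0$ makes the right side tend to $0$ while the left side stays bounded below. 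Even granting your display, the final step is also off: $L^r\to L^r$ boundedness of $\mathcal M_{\vec\alpha}$ with $r=q/p$ applied to $h=|f|^p$ produces $\int(\mathcal M_{\vec\alpha}(|f|^p))^{q/p}\,d\mathcal V_{\vec\alpha}\le C\int|f|^{q}\,d\mathcal V_{\vec\alpha}$, not $C\big(\int|f|^{p}\,d\mathcal V_{\vec\alpha}\big)^{q/p}$; and when $p=q$ your scheme would need $\mathcal M_{\vec\alpha}$ bounded on $L^1$, which fails.

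The paper's fix (Proposition~\ref{prop:strongmaxembed}) is to prove the \emph{correct} maximal embedding in one stroke,
$$\int_{\Pi^+\times\Pi^+}\big(\mathcal M_{\vec\alpha}f\big)^{q}\,d\mu\;\le\;C\Big(\int_{\Pi^+\times\Pi^+}|f|^{p}\,d\mathcal V_{\vec\alpha}\Big)^{q/p},$$
for all $f\in L^p_{\vec\alpha}$, via the dyadic level-set computation you anticipate. The point you are missing is how the exponents are handled: after inserting \eqref{eq:carlmeaprod} one has a sum of terms $\big(\mathrm{avg}_{Q_R}|f|\big)^q\,\mathcal V_{\vec\alpha}(Q_R)^{q/p}$ over rectangles $R$; the elementary embedding $\sum a_R^{q/p}\le\big(\sum a_R\big)^{q/p}$ (valid since $q/p\ge1$) with $a_R=\big(\mathrm{avg}_{Q_R}|f|\big)^p\,\mathcal V_{\vec\alpha}(Q_R)$ converts this to a $p$-th power sum, the tiling by top halves $\{T_R\}$ turns the sum into $\int(\mathcal M_{\vec\alpha}f)^p\,d\mathcal V_{\vec\alpha}$, and the $L^p$ (not $L^{q/p}$) boundedness of $\mathcal M_{\vec\alpha}$ finishes. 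Combining with $|f|\le C\,\mathcal M_{\vec\alpha}f$ for holomorphic $f$ gives (b).
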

We define the strong maximal function on $\Pi^+	\times  \Pi^+$ to be the operator 
$$\mathcal{M}_{\vec{\alpha}}f(z):=\sup_{R\in \mathcal{R}}\frac{\chi_{Q_R}(z)}{\mathcal V_{\vec{\alpha}}(Q_R)}\int_{Q_R}|f(w)|d\mathcal V_{\vec{\alpha}}(w).$$
We observe that $$\mathcal{M}_{\vec{\alpha}}f\le M_{\alpha_1}\circ M_{\alpha_2}f$$
where $M_{\alpha_j}$ is the one-parameter maximal function on $\Pi^+$. It follows from the boundedness of $M_{\alpha_j}$ that for any $1<p\le \infty$,
\Be\label{eq:maxfunctineq}\|\mathcal{M}_{\vec{\alpha}}f\|_{p,\vec{\alpha}}\le C_{p,\vec{\alpha}}\|f\|_{p,\vec{\alpha}}.\Ee 
Using the mean value formula in each variable, we obtain that there is a constant $C>0$ such that for any $z\in \Pi^+	\times  \Pi^+$,
$$|f(z)|\le \frac{C}{\mathcal V_{\vec{\alpha}}(Q_R)}\int_{Q_R}|f(w)|d\mathcal V_{\vec{\alpha}}(w)$$
provided $f$ is analytic on $\Pi^+	\times  \Pi^+$; where for $z=(z_1,z_2)$, $Q_R$ is such that $z_j$ is the centre of $Q_{I_j}$, $R=I_1\times I_j$.
\vskip .2cm
It follows in particular that there is a constant $C>0$ such that for any $f$ analytic on $\Pi^+	\times  \Pi^+$,
$$|f(z)|\le C\mathcal{M}_{\vec{\alpha}}f(z),\,\,\,\textrm{for any}\,\,\,z\in \Pi^+	\times  \Pi^+.$$
Let us prove the following.
\begin{proposition}\label{prop:strongmaxembed}
Let $\vec{\alpha}=(\alpha_1,\alpha_2)$, with $\alpha_1,\alpha_2>-1$, and let $1<p\leq q<\infty$. Assume $\mu$ is $(\frac qp,\vec{\alpha})$-Carleson measure. Then there is a constant $C>0$ such that for any $f\in L^p(\Pi^+	\times  \Pi^+, d\mathcal V_{\vec{\alpha}}(w))$,
$$\int_{\Pi^+	\times  \Pi^+}\left(\mathcal{M}_{\vec{\alpha}}f(z)\right)^qd\mu(z)\le C\left(\int_{\Pi^+	\times  \Pi^+}|f(z)|^pd\mathcal V_{\vec{\alpha}}(z)\right)^{\frac qp}
.$$
\end{proposition}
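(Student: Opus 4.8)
The plan is to transfer to the bi-parameter setting the classical one-parameter argument for weighted Bergman spaces on $\Pi^+$ (cf.\ \cite{CarBen}), with rectangles in place of intervals and with the product real-variable techniques of \cite{CD,GR,SE}. First I would reduce to a dyadic model. By the one-third trick applied separately in each variable, $M_{\alpha_1}\circ M_{\alpha_2}f$ is pointwise dominated by a finite sum of compositions $M^{d}_{\alpha_1}\circ M^{d}_{\alpha_2}f$ attached to finitely many shifted dyadic grids on $\mathbb R$; since $\mathcal M_{\vec\alpha}f\le M_{\alpha_1}\circ M_{\alpha_2}f$, since $q\ge1$, and since \eqref{eq:carlmeaprod} holds for \emph{all} Carleson rectangles (in particular all shifted-dyadic ones), it is enough to prove the embedding with $\mathcal M^{d}_{\vec\alpha}:=M^{d}_{\alpha_1}\circ M^{d}_{\alpha_2}$ for one such pair of grids. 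All Carleson squares $Q_I$ and rectangles $Q_R=Q_{I_1}\times Q_{I_2}$ below are then built from dyadic intervals.

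For $k\in\mathbb Z$ set $E_k:=\{z:\mathcal M^{d}_{\vec\alpha}f(z)>2^k\}$. The crucial step is to produce a family $\mathcal S_k$ of dyadic Carleson rectangles which covers $E_k$, on each of which the relevant iterated average of $|f|$ exceeds a fixed multiple of $2^k$, and which is sparse in the sense that
\[
\sum_{Q_R\in\mathcal S_k}\mathcal V_{\vec\alpha}(Q_R)\le A\,\mathcal V_{\vec\alpha}\bigl(\{\,\mathcal M^{d}_{\vec\alpha}f>c\,2^k\,\}\bigr),
\]
with $A,c$ absolute constants. Since $\mathcal M^{d}_{\vec\alpha}$ is built by stopping first against $M^{d}_{\alpha_2}f$ in the second variable and then against $M^{d}_{\alpha_1}$ in the first, I would construct $\mathcal S_k$ by a two-stage stopping time: at the outer stage select the maximal dyadic first-variable squares on which the $M^{d}_{\alpha_2}f$-average in the first variable is large, and at the inner stage, inside each selected first-variable square, run the one-parameter stopping time in the second variable; the one-parameter Carleson packing estimate, used once at each stage, yields the displayed bound. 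At this point the hypothesis \eqref{eq:carlmeaprod} gives $\mu(Q_R)\le C\,\mathcal V_{\vec\alpha}(Q_R)^{q/p}$ for every $Q_R\in\mathcal S_k$.

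The conclusion is then routine. Writing $a_k:=\mathcal V_{\vec\alpha}(\{\mathcal M^{d}_{\vec\alpha}f>2^k\})$ and using superadditivity of $t\mapsto t^{q/p}$ (valid since $q/p\ge1$),
\[
\mu(E_k)\le\sum_{Q_R\in\mathcal S_k}\mu(Q_R)\le C\sum_{Q_R\in\mathcal S_k}\mathcal V_{\vec\alpha}(Q_R)^{q/p}\le C\Bigl(\sum_{Q_R\in\mathcal S_k}\mathcal V_{\vec\alpha}(Q_R)\Bigr)^{q/p}\le C'\,a_{k-N}^{q/p},
\]
where $N$ is the absolute integer with $c=2^{-N}$. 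Slicing the integral over the level sets, using the elementary identity $2^{kq}t^{q/p}=(2^{kp}t)^{q/p}$, then $\ell^{q/p}\hookrightarrow\ell^1$ (again $q/p\ge1$), and finally the layer-cake formula together with the $L^p_{\vec\alpha}$-bound \eqref{eq:maxfunctineq},
\[
\int_{\Pi^+\times\Pi^+}\bigl(\mathcal M_{\vec\alpha}f\bigr)^q\,d\mu\le\sum_{k}2^{(k+1)q}\mu(E_k)\le C\sum_{k}\bigl(2^{kp}a_{k-N}\bigr)^{q/p}\le C\Bigl(\sum_{k}2^{kp}a_{k-N}\Bigr)^{q/p}\le C\,\|\mathcal M^{d}_{\vec\alpha}f\|_{p,\vec\alpha}^{q}\le C\,\|f\|_{p,\vec\alpha}^{q},
\]
which is the asserted inequality.

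The genuinely delicate point is the sparse rectangle decomposition of the second paragraph. In one variable one simply takes the maximal dyadic squares inside a level set and these are automatically disjoint; in two variables the maximal dyadic rectangles inside a union of dyadic rectangles are neither disjoint nor of bounded overlap, which is exactly the C\'ordoba--Fefferman obstruction that prevents the strong maximal function from being of weak type $(1,1)$. The argument survives because $E_k$ is a level set of the \emph{iterated} operator $M^{d}_{\alpha_1}\circ M^{d}_{\alpha_2}$ and not of the genuine strong maximal function, so the selection of rectangles can be organised hierarchically, one coordinate after the other, and a one-parameter packing lemma applies at each stage; verifying that the resulting constants are absolute is the only step that is not entirely mechanical. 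Everything else — the dyadic reduction, the boundedness of $\mathcal M_{\vec\alpha}$ on $L^p_{\vec\alpha}$, and the summation in $k$ — is standard.
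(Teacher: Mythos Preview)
Your dyadic reduction and the final summation in $k$ are both fine and parallel the paper's argument. The divergence is entirely in the middle step, and the paper's route sidesteps precisely the difficulty you yourself flag as ``genuinely delicate''.

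The paper does \emph{not} build a sparse family of rectangles and does \emph{not} run a two-stage stopping time. It works with the dyadic \emph{strong} maximal function (supremum of averages over all dyadic Carleson rectangles $Q_R$, not the iterated operator $M^d_{\alpha_1}\circ M^d_{\alpha_2}$), takes the annular level sets $E_k=\{2^k<\mathcal M^{d}_{\vec\alpha}f\le 2^{k+1}\}$, and covers $E_k$ by $\bigcup_{R\in\mathcal F_k\setminus\mathcal F_{k+1}}Q_R$, where $\mathcal F_k$ is the set of dyadic rectangles whose average exceeds $2^k$. The substitute for your packing estimate is a purely geometric observation: the \emph{top halves} $T_R:=T_{I_1}\times T_{I_2}$ of the dyadic Carleson rectangles are pairwise disjoint and tile $\Pi^+\times\Pi^+$, even though the $Q_R$ themselves overlap badly. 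Since each dyadic rectangle lies in exactly one $\mathcal F_k\setminus\mathcal F_{k+1}$ and $\mathcal V_{\vec\alpha}(Q_R)\approx\mathcal V_{\vec\alpha}(T_R)$, one simply writes
\[
\sum_k\sum_{R\in\mathcal F_k\setminus\mathcal F_{k+1}}\Bigl(\frac{1}{\mathcal V_{\vec\alpha}(Q_R)}\int_{Q_R}|f|\,d\mathcal V_{\vec\alpha}\Bigr)^p\mathcal V_{\vec\alpha}(T_R)
\le\sum_{R}\int_{T_R}(\mathcal M_{\vec\alpha}f)^p\,d\mathcal V_{\vec\alpha}
=\int_{\Pi^+\times\Pi^+}(\mathcal M_{\vec\alpha}f)^p\,d\mathcal V_{\vec\alpha},
\]
and the $L^p_{\vec\alpha}$-boundedness of $\mathcal M_{\vec\alpha}$ finishes the proof after an application of $\ell^1\hookrightarrow\ell^{p/q}$ to pass from exponent $q$ to exponent $p$.

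So the paper's approach completely avoids the C\'ordoba--Fefferman obstruction: there is no hierarchical selection and no packing lemma, because disjointness comes for free from the top-half tiling. Your iterated stopping-time programme is plausible and can probably be pushed through, but as written it is only a sketch at the crucial step, and it is more work than necessary here; the top-half tiling is the idea you are missing.
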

\begin{proof}
Consider the following dyadic grids
$$\mathcal{D}^{\beta}:=\{2^j([0,1)+m+(-1)^j\beta):m,j\in \mathbb{Z}\}\,\,\,\textrm{for}\,\,\,\beta\in \left \{0,\frac 13\right \}.$$
For $\beta=0$, $\mathcal{D}^{\beta}=\mathcal{D}^0$ is the standard dyadic grid of $\mathbb{R}$ denoted $\mathcal{D}$.
\vskip .2cm
For $\vec{\beta}=(\beta_1,\beta_2)$, $\beta_j\in \left \{0,\frac 13\right \}$, we define the dyadic strong maximal function $\mathcal{M}_{\vec{\alpha}}^{d,\vec{\beta}}$ by
$$\mathcal{M}_{\vec{\alpha}}^{d,\vec{\beta}}f(z):=\sup_{R\in \mathcal{D}^{\vec{\beta}}}\frac{\chi_{Q_R}(z)}{\mathcal V_{\vec{\alpha}}(Q_R)}\int_{Q_R}|f(w)|d\mathcal V_{\vec{\alpha}}(w),$$
$\mathcal{D}^{\vec{\beta}}=\mathcal{D}^{\beta_1}\times \mathcal{D}^{\beta_2}$.
\vskip .2cm
We recall for any interval $I$ of $\mathbb{R}$, there exists an interval $J\in \mathcal{D}^{\beta}$ for some $\beta\in \{0,\frac 13\}$, such that $I\subseteq J$ and $|J|\le 6|I|$ (see \cite{PR}). It follows that the proposition will follow if we can prove that for $\mu$ an $(\frac qp, \vec{\alpha})$-Carleson measure, we can find a constant $C>0$ such that 
$$\int_{\Pi^+	\times  \Pi^+}\left(\mathcal{M}_{\vec{\alpha}}^{d,\vec{\beta}}f(z)\right)^qd\mu(z)\le C\left(\int_{\Pi^+	\times  \Pi^+}|f(z)|^pd\mathcal V_{\vec{\alpha}}(z)\right)^{\frac qp}
$$
for any $\vec{\beta}\in \{0,\frac 13\}^2$.
\vskip .2cm
For simplicity of presentation and because our proof works the same for each product dyadic grid, we restrict to the case $\vec{\beta}=(0,0)$ and denote by $\mathcal{M}_{\vec{\alpha}}^{d}$ the corresponding dyadic strong maximal function. 
For $R=I\times J\in\mathcal{D}\times \mathcal{D}=\mathcal{D}^{(0,0)}$, by top half of the Carleson rectangle $Q_R$, we will mean the set $$T_R:=T_I\times T_J.$$
We observe that unlike in the one parameter case, for $R_1\in \mathcal{D}\times \mathcal{D}$ and $R_2\in \mathcal{D}\times \mathcal{D}$, we have $$R_1\cap R_2\in \{\emptyset, R_1,R_2, R'\}$$
with $R'\in \mathcal{D}\times \mathcal{D}\,\,\,\textrm{such that} \,\,\,R'\subsetneq R_1 \,\,\,\textrm{and} \,\,\,R'\subsetneq R_2.$ 
Nevertheless, we still have that the family $\{T_R\}_{R\in \mathcal{D}\times \mathcal{D}}$ forms a tiling of $\Pi^+\times\Pi^+$. Indeed, one observes that even if two rectangles intersect with their intersection strictly contained in each of them, the corresponding top halves of the Carleson rectangles are still disjoint.

Let $f\in L^p(\Pi^+	\times  \Pi^+,d\mathcal V_{\vec{\alpha}}(w))$. For each integer $k$, define
$$E_k:=\{z\in \Pi^+	\times  \Pi^+: 2^k<\mathcal{M}_{\vec{\alpha}}^{d}f(z)\le 2^{k+1}\}.$$ 
Denote by $\mathcal{F}_k$ the family of all rectangles $R\in \mathcal{D}\times \mathcal{D}$ such that
$$\frac{1}{\mathcal V_{\vec{\alpha}}(Q_R)}\int_{Q_R}|f(w)|d\mathcal V_{\vec{\alpha}}(w)>2^k.$$
Then clearly, we have that 
\Be\label{eq:rectanglemax}
E_k\subseteq\bigcup_{R\in \mathcal{F}_k\setminus\mathcal{F}_{k+1}}Q_R.
\Ee
Indeed, let $z\in E_k$ and suppose that there is no dyadic rectangle $R$ with $z\in Q_R$ such that $$2^k<\frac{1}{\mathcal V_{\vec{\alpha}}(Q_R)}\int_{Q_R}|f(w)|d\mathcal V_{\vec{\alpha}}(w)\le 2^{k+1}.$$
Then for any $R\in \mathcal{D}\times \mathcal{D}$ such that $z\in Q_R$, either $$\frac{1}{\mathcal V_{\vec{\alpha}}(Q_R)}\int_{Q_R}|f(w)|d\mathcal V_{\vec{\alpha}}(w)\le 2^k$$
or $$\frac{1}{\mathcal V_{\vec{\alpha}}(Q_R)}\int_{Q_R}|f(w)|d\mathcal V_{\vec{\alpha}}(w)>2^{k+1}.$$
Hence either $\mathcal{M}_{\vec{\alpha}}^{d}f(z)\le 2^k$ or $\mathcal{M}_{\vec{\alpha}}^{d}f(z)>2^{k+1}$. This contradicts the fact that $z\in E_k$.
\vskip 2truemm
It first follows that
$$
\begin{array}{clcr}
L &:=& \int_{\Pi^+	\times  \Pi^+}\left(\mathcal{M}_{\vec{\alpha}}^{d}f(z)\right)^qd\mu(z)\\ &=& \sum_k\int_{E_k}\left(\mathcal{M}_{\vec{\alpha}}^{d}f(z)\right)^qd\mu(z)\\ &\le& 2^q\sum_k 2^{kq}\mu(E_k)\\ &\le& 2^q\sum_k \sum_{R\in \mathcal{F}_k\setminus\mathcal{F}_{k+1}}2^{kq}\mu(Q_R)\\ &\le& C2^q\sum_k\sum_{R\in \mathcal{F}_k\setminus\mathcal{F}_{k+1}}2^{kq}\left(\mathcal V_{\vec{\alpha}}(Q_R)\right)^{q/p}.
\end {array}
$$
We next use the property (\ref{eq:rectanglemax}) of rectangles in $\mathcal{F}_k$ and the equivalence $\mathcal V_{\vec{\alpha}}(Q_R)\approx \mathcal V_{\vec{\alpha}}(T_R)$ to obtain
$$
\begin{array}{clcr}
L &\le& C2^q\sum_k\sum_{R\in \mathcal{F}_k\setminus\mathcal{F}_{k+1}}\left(\frac{1}{\mathcal V_{\vec{\alpha}}(Q_R)}\int_{Q_R}|f(w)|d\mathcal V_{\vec{\alpha}}(w)\right)^q\left(\mathcal V_{\vec{\alpha}}(Q_R)\right)^{q/p}\\ &\le& C2^q\left(\sum_k\sum_{R\in \mathcal{F}_k\setminus\mathcal{F}_{k+1}}\left(\frac{1}{\mathcal V_{\vec{\alpha}}(Q_R)}\int_{Q_R}|f(w)|d\mathcal V_{\vec{\alpha}}(w)\right)^p\mathcal V_{\vec{\alpha}}(Q_R)\right)^{q/p}\\ &\approx& C2^q\left(\sum_k\sum_{R\in \mathcal{F}_k\setminus\mathcal{F}_{k+1}}\left(\frac{1}{\mathcal V_{\vec{\alpha}}(Q_R)}\int_{Q_R}|f(w)|d\mathcal V_{\vec{\alpha}}(w)\right)^p\mathcal V_{\vec{\alpha}}(T_R)\right)^{q/p}.
\end {array}
$$
Recalling that $\{T_R\}_{R\in \mathcal{D}\times \mathcal{D}}$ forms a tiling of $\Pi^+\times \Pi^+$ and using (\ref{eq:maxfunctineq}), we finally obtain
\Beas
L &:=& \int_{\Pi^+	\times  \Pi^+}\left(\mathcal{M}_{\vec{\alpha}}^{d}f(z)\right)^qd\mu(z)\\
&\le& C2^q\left(\sum_k\sum_{R\in \mathcal{F}_k\setminus\mathcal{F}_{k+1}}\int_{T_R}\left(\frac{1}{\mathcal V_{\vec{\alpha}}(Q_R)}\int_{Q_R}|f(w)|d\mathcal V_{\vec{\alpha}}(w)\right)^p d\mathcal V_{\vec{\alpha}}(z)\right)^{q/p}\\
&\le& C2^q \left (\sum_{R\in \mathcal D \times \mathcal D} \int_{T_R} (\mathcal{M}_{\vec{\alpha}}f(z))^pd\mathcal V_{\vec{\alpha}}(z)\right)^{q/p}\\
&=& C2^q\left(\int_{\Pi^+\times \Pi^+}(\mathcal{M}_{\vec{\alpha}}f(z))^pd\mathcal V_{\vec{\alpha}}(z)\right)^{q/p}\\ &\le& C\left(\int_{\Pi^+\times \Pi^+}|f(z)|^pd\mathcal V_{\vec{\alpha}}(z)\right)^{q/p}.
\Eeas
The proof of the proposition is complete.
\end{proof}
Finally, we prove Theorem \ref{thm:carlbergprod}.
\begin{proof}[Proof of Theorem \ref{thm:carlbergprod}]
That (a)$\Rightarrow(b)$ follows from the observations made above and Proposition \ref{prop:strongmaxembed}.
\vskip .2cm
(b)$\Rightarrow$(a): Assume that there is a constant $C>0$ such that for every $f\in A_{\vec{\alpha}}^p(\Pi^+	\times  \Pi^+)$,
\Be\label{eq:ineqtest}\int_{\Pi^+	\times  \Pi^+}|f(z)|^qd\mu(z)\le C\left(\int_{\Pi^+	\times  \Pi^+}|f(z)|^pd\mathcal V_{\vec{\alpha}}(z)\right)^{\frac qp}
\Ee
Let $Q_{I_1\times I_2}$ be a fixed Carleson box, and let $w=(w_1,w_2)$, where $w_j$ is the centre of the Carleson square $Q_{I_j}$, $j=1,2$. We consider the function $f_w$ defined on $\Pi^+	\times  \Pi^+$ by
$$f_w(z):=\left(\frac{(\Im m \hskip 1truemm w_1)^{1+\frac {\alpha_1}2}}{(z_1-\overline{w}_1)^{2+\alpha_1}}\frac{(\Im m \hskip 1truemm w_2)^{1+\frac {\alpha_2}2}}{(z_2-\overline{w}_2)^{2+\alpha_2}}\right)^{\frac 2p},\hskip 2truemm z=(z_1,z_2)\in \Pi^+	\times  \Pi^+.$$
Then $f_w$ is uniformly in $A_{\vec{\alpha}}^p(\Pi^+	\times  \Pi^+)$. We observe for any $z_j\in Q_{I_j}$, $|z_j-\overline{w}_j|\approx \Im m \hskip 1truemm w_j$. Testing the inequality (\ref{eq:ineqtest}) with our choice $f_w$, we obtain
\Beas  \frac{\mu(Q_{I_1\times I_2})}{(\Im m\hskip 1truemm w_1)^{(2+\alpha_1)\frac{q}{p}}(\Im m\hskip 1truemm w_2)^{(2+\alpha_2)\frac{q}{p}}} &\lesssim& \int_{\Pi^+	\times  \Pi^+}|f_w(z)|^qd\mu(z)\\ &\le& C\left(\int_{\Pi^+	\times  \Pi^+}|f_w(z)|^pd\mathcal V_{\vec{\alpha}}(z)\right)^{\frac qp}\\ &\le& C.
\Eeas
The proof is complete.
\end{proof}
\bibliographystyle{plain}

\begin{thebibliography}{1}








\bibitem{BBGNPR}
\textsc{B\'ekoll\'e D., A. Bonami, G. Garrig\'os, C. Nana, M. M. Peloso, F. Ricci}, 
\newblock{\em Bergman projectors in tube domains
over cones: an analytic and geometric viewpoint},
\newblock{IMHOTEP-African J. Pure Appl. Math.} \textbf{5} (1)
(2004),
\newblock{Lecture Notes of the Workshop "Classical Analysis,
Partial Differential Equations and Applications", Yaound\'e},
December 10-15 (2001).



\bibitem{BST}
\textsc{B\'ekoll\'e D., B. Sehba, E. Tchoundja}, \newblock{{The Duren-Carleson theorem in tube
  domains over symmetric cones}}, {\em Integr. Equ. Oper. Theory} \textbf{86} (4) (2016), 475-494. 






\bibitem{CD}
\textsc{Cruz-Uribe D.}, New proofs of two-weight norm inequalities for the maximal
operator, Georgian Math. J. {\bf 7} (2000), 33-42.

\bibitem{DD}
\textsc{Debertol D.,} Besov spaces and boundedness of weighted Bergman projections over symmetric tube
domains. Dottorato di Ricerca in Matematica, Universit\`a di Genova, Politecnico di Torino, (April 2003).

\bibitem{CarBen}
\textsc{Dondjio, C.,  B. Sehba}, Maximal function and Carleson measures in the theory of B\'ekoll\'e-Bonami weights, Colloq. Math. {\bf 142}, no. 2 (2016), 211-226.

\bibitem{D}
\textsc{Duren, P.}
{\em Theory of $H^p$ spaces,}
\newblock{Academic Press, New York {\it } (1970).}


\bibitem{D2}
\textsc{Duren P., Gallardo-Gutiérrez E. A., and Montes-Rodríguez A.}, A Paley-Wiener theorem for Bergman
spaces with application to invariant subspaces. Bull. Lond. Math. Soc., {\bf 39} (3):459–466, 2007.

\bibitem{FK}
\textsc{ Faraut, J., A. Kor\'anyi}, {\em Analysis on symmetric
cones},
\newblock{Clarendon Press, Oxford} (1994).

\bibitem{GR}
\textsc{Garc\'ia-Cuerva, J.,  J. L. Rubio de Francia}, Weighted Norm Inequalities and
Related Topics, North-Holland Math. Stud. {\bf 116}, North-Holland, Amsterdam, 1985.

\bibitem{G}
\textsc{Garnett, J. B.}, {\em Bounded analytic functions}, Revised First Edition, Graduate Texts in Math. \textbf {236}, Springer (2007).




\bibitem{H}
\textsc{Hastings, W.W.} 
\newblock{A Carleson measure theorem for Bergman spaces,}
{\em Proc. Amer. Math. Soc.}  \textbf{52} (1975), 237-241.

\bibitem{J}
\textsc{Jafari, F.}, \newblock{Carleson measures in Hardy and weighted Bergman spaces of polydiscs}, {\em Proc. AMS}  \textbf{112} (3) (1991, 771-781.


\bibitem{L}
\textsc{Luecking, D.H.};
\newblock{Embedding derivatives of Hardy spaces into Lebesgue spaces,}
{\em Proc. London Math. Soc.} (3) \textbf{63} (1991), 595-619.


\bibitem{NS}
\textsc{Nana, C., B. Sehba}, \newblock{Carleson embeddings and two operators on Bergman spaces of tube domains over symmetric cones.} Integr. Equat. Oper. Theor. {\bf 83} (2015), no. 2, 151–178.


\bibitem{PR}
\textsc{Pott, S., M. C. Reguera}, {\em Sharp B\'ekoll\'e estimates for the Bergman projection.}
J. Funct. Anal. {\bf 265} (2013), 3233-3244.

\bibitem{SE}
\textsc{Sawyer, E. T.},  A characterization of a two-weight norm inequality for maximal
operators, Studia Math. {\bf 75} (1982), 1-11.

\bibitem{St}
\textsc{Stegenga, D.}, {\em Multipliers of the Dirichlet spaces.} 
Illinois J. Math. {\bf 24} (1980), 113-139.

\bibitem{SW}
\textsc{Stein, E.M., G. Weiss}, {\em Introduction to Fourier analysis on Euclidean spaces},  Princeton University Press, Princeton, New Jersey (1971).






\end{thebibliography}

\end{document}